\documentclass[reqno]{amsart}%
\usepackage{amssymb,mathtools,enumerate,mathrsfs,lineno}

\usepackage{ifpdf}
\ifpdf
\usepackage[hyperindex]{hyperref}
\else
\expandafter\ifx\csname dvipdfm\endcsname\relax
\usepackage[hypertex,hyperindex]{hyperref}%
\else
\usepackage[dvipdfm,hyperindex]{hyperref}%
\fi
\fi

\allowdisplaybreaks[4]
\numberwithin{equation}{section}

\theoremstyle{plain}
\newtheorem{thm}{Theorem}[section]
\newtheorem{cor}{Corollary}[section]
\newtheorem{lem}{Lemma}[section]

\theoremstyle{remark}
\newtheorem{rem}{Remark}[section]

\theoremstyle{definition}
\newtheorem{dfn}{Definition}[section]

\DeclareMathOperator{\td}{d\!}
\DeclareMathOperator{\te}{e}

\begin{document}

\title[Bernstein and Stieltjes properties]
{Bernstein and Stieltjes properties of some functions involving logarithmic functions}

\author[F. Qi]{Feng Qi}

\address{School of Mathematics and Physics,
Hulunbuir University,
Hulunbuir 021008, Inner Mongolia,
China;
17709 Sabal Court, University Village,
Dallas, TX 75252-8024, USA}
\email{qifeng618@gmail.com}
\urladdr{https://orcid.org/0000-0001-6239-2968}

\keywords{Stieltjes function, completely monotonic function, logarithmically completely monotonic function, complete Bernstein function, integral representation, logarithmic function, exponential integral function}

\subjclass{Primary 26A48; Secondary 26A09, 26A51, 44A10}

\begin{abstract}
We show that $\frac1{g(x)}$, with $g(x)=\frac{x-1}{x\ln x}$, is a complete Bernstein function and give its L\'evy measure explicitly. For $h(x)=\frac{(x^2-1)\ln(x+1)}{x^2\ln x}$, we prove that $h$ is not a Stieltjes function, so $\frac1h$ is not a complete Bernstein function, but $\frac1h$ is a Bernstein function. The latter rests on the positivity of a function built from the convolution of the exponential integrals $E_1$ and $E_2$. We also give a related completely monotonic function that is not a Stieltjes function.
\end{abstract}

\thanks{This paper was typeset using \AmS-\LaTeX}

\maketitle

\section{Preliminaries}
Let us review several foundational concepts to set the stage for what follows.

\begin{dfn}[{\cite[Chapter~XIII]{mpf-1993}, \cite{Schilling-Song-Vondracek-2nd}, and~\cite[Chapter~IV]{widder}}]
A function $f(x)$ is said to be completely monotonic on an interval $I\subseteq\mathbb{R}=(-\infty,\infty)$ if it is infinitely differentiable on $I$ and the inequalities
\begin{equation*}
(-1)^{n}f^{(n)}(x)\ge0, \quad n\in \mathbb{N}_0=\{0,1,2,\dotsc\}
\end{equation*}
hold for all $x\in I$.
\end{dfn}

\begin{dfn}[{\cite{absolute-mon-simp.tex, comgam1, e-gam-rat-comp-mon} and~\cite[Chapter~5]{Schilling-Song-Vondracek-2nd}}]
A positive function $f(x)$ is said to be logarithmically completely monotonic on an interval $I\subseteq\mathbb{R}$ if it is infinitely differentiable on $I$ and the inequalities
\begin{equation*}
(-1)^n[\ln f(x)]^{(n)}\ge 0, \quad n\in \mathbb{N}=\{1,2,\dotsc\}
\end{equation*}
hold for all $x\in I$.
\end{dfn}

\begin{dfn}[\cite{Schilling-Song-Vondracek-2nd, widder}]\label{Bernstein-dfn}
A function $f:I\subseteq\mathbb{R}\to\mathbb{R}_0=[0,\infty)$ is called a Bernstein function on the interval $I$ if $f(x)$ has derivatives of all orders and $f'(x)$ is completely monotonic on $I$.
\end{dfn}

\begin{dfn}[{\cite[p.~16, Definition~2.1]{Schilling-Song-Vondracek-2nd}}] \label{dfn-stieltjes}
A Stieltjes function is a function $f:\mathbb{R}_+=(0,\infty)\to\mathbb{R}_0$ which can be written in the form
\begin{equation*}
f(x)=\frac{a}x+b+\int_0^{\infty}\frac1{s+x}{\td\sigma(s)},
\end{equation*}
where $a,b\ge0$ are constants and $\sigma$ is a nonnegative measure on $\mathbb{R}_+$ such that
\begin{equation*}
\int_0^{\infty}\frac1{1+s}\td\sigma(s)<\infty.
\end{equation*}
\end{dfn}

\begin{dfn}[{\cite[Definition~6.1]{Schilling-Song-Vondracek-2nd}}]
If the L\'evy measure $\sigma$ from~\eqref{Levy-Khintchine-repreentation} has a completely monotonic density $m(t)$ with respect to the Lebesgue measure, that is, the integral representation
\begin{equation*}
f(x)=a+bx+\int_0^\infty\bigl(1-\te^{-xt}\bigr)m(t)\td t
\end{equation*}
is valid for $a,b\ge0$, where $m(t)$ is a completely monotonic function on $\mathbb{R}_+$ such that $\int_0^\infty\min\{1,t\}m(t)\td t<\infty$, then $f$ is said to be a complete Bernstein function on $\mathbb{R}_+$.
\end{dfn}

The above five kinds of functions have the following relations and properties.

\begin{lem}[{\cite{CBerg, absolute-mon-simp.tex, comgam1, JAAC384.tex, Schilling-Song-Vondracek-2nd} and~\cite[pp.~161\nobreakdash--162, Theorem~3]{Chen-Qi-Srivastava-09.tex}}] \label{relations-thm}
Every logarithmically completely monotonic function on an interval $I\subseteq\mathbb{R}$ is necessarily completely monotonic on $I$, but the converse does not hold.
\par
Every positive Stieltjes function is logarithmically completely monotonic on $\mathbb{R}_+$, but again the converse is not true.
\par
The reciprocal of any positive Bernstein function on an interval $I$ is logarithmically completely monotonic on $I$.
\end{lem}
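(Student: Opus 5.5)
The lemma has three parts, and I will prove each one separately using standard integral representations.

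\emph{Logarithmic complete monotonicity implies complete monotonicity.} Let $f$ be logarithmically completely monotonic on $I$ and set $\varphi=-(\ln f)'$. Then $\varphi$ is completely monotonic on $I$, and $f'=-f\varphi$. I will show by induction on $n$ that $(-1)^nf^{(n)}\ge0$. The Leibniz rule gives
\begin{equation*}
(-1)^{n+1}f^{(n+1)}=\sum_{k=0}^{n}\binom{n}{k}\bigl[(-1)^{k}f^{(k)}\bigr]\bigl[(-1)^{n-k}\varphi^{(n-k)}\bigr].
\end{equation*}
Every factor on the right is nonnegative: the first by the induction hypothesis, the second because $\varphi$ is completely monotonic. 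This closes the induction.

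For the failure of the converse I will exhibit a completely monotonic function that is not log-convex. Every logarithmically completely monotonic function is log-convex, since $(\ln f)''\ge0$. Take $f(x)=\te^{-x}+\te^{-2x}$ near $x=0$... in fact this sum is log-convex by Cauchy--Schwarz, so it is not a counterexample. Instead I will use $f(x)=1+\te^{-x}$, or more generally a Laplace transform of a measure not in the log-cm class. Following Horn's characterization, a completely monotonic $f$ is logarithmically completely monotonic iff $f^{1/n}$ is completely monotonic for all $n$. A concrete witness is $f(x)=\te^{-x}+\te^{-3x}$ together with a check that $\sqrt f$ fails complete monotonicity. I expect that finding a clean explicit witness, which the references supply, is the main fiddly point of the whole lemma.

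\emph{Positive Stieltjes functions are logarithmically completely monotonic.} Write $\frac1{s+x}=\int_0^\infty \te^{-(s+x)t}\td t$. Then a Stieltjes function is the Laplace transform of a positive measure plus a constant, so it is completely monotonic. For the logarithmic statement I will use the known fact that $f$ is Stieltjes iff $1/f$ is a complete Bernstein function. This is exactly where the paper's definition via~\cite{Schilling-Song-Vondracek-2nd} enters. The claim then reduces to the third statement.

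For the converse, $\te^{-x}$ is logarithmically completely monotonic, since $-(\ln \te^{-x})'=1$. It is not Stieltjes, because Stieltjes functions extend holomorphically to $\mathbb{C}\setminus(-\infty,0]$ with $\operatorname{Im}f(z)\le0$ for $\operatorname{Im}z>0$, and $\te^{-z}$ violates this sign condition.

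\emph{Reciprocals of positive Bernstein functions are logarithmically completely monotonic.} Let $g>0$ be Bernstein and set $f=1/g$. Then $-(\ln f)'=g'/g$. The function $g'$ is completely monotonic by definition. The function $1/g$ is completely monotonic, being the composition of the completely monotonic function $u\mapsto 1/u$ with the Bernstein function $g$; this is the standard composition rule, proved via the Fa\`a di Bruno formula with sign tracking. A product of completely monotonic functions is completely monotonic by the same Leibniz argument as above. Hence $g'/g$ is completely monotonic, which is precisely the statement that $f$ is logarithmically completely monotonic.
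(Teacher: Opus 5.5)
The paper gives no proof of this lemma, only citations. Your arguments for the three positive implications are correct and are the standard ones found in those sources:
\begin{itemize}
\item the Leibniz induction from $f'=-f\varphi$;
\item the reduction of the Stieltjes case to the Bernstein case via Lemma~\ref{Bern2nd-Thm7.3};
\item writing $g'/g$ as the product of the completely monotonic functions $g'$ and $1/g$.
\end{itemize}
Your witness $\te^{-x}$ for the second converse is also valid. A more elementary check: for any Stieltjes $f$, the function $xf(x)=a+bx+\int_0^\infty\frac{x}{s+x}\td\sigma(s)$ is nondecreasing, whereas $x\te^{-x}$ is not.

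The gap is the converse in the first statement. You never actually exhibit a completely monotonic function that is not logarithmically completely monotonic; you name candidates and postpone the verification. You also reject $\te^{-x}+\te^{-2x}$ for a wrong reason. Log-convexity is only the condition with $n=2$, and on $\mathbb{R}_+$ every completely monotonic function is log-convex (Cauchy--Schwarz applied to its Laplace representation). So log-convexity cannot separate the two classes, and in fact $\te^{-x}+\te^{-2x}$ is a valid witness. Horn's theorem is not needed either.

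Your candidate $f(x)=1+\te^{-x}$ settles the matter directly. It is obviously completely monotonic on $\mathbb{R}_+$. Its logarithmic derivative is $-(\ln f)'(x)=\frac{1}{1+\te^{x}}$, and
$\bigl(\frac{1}{1+\te^{x}}\bigr)'''=\frac{\te^{x}(4\te^{x}-\te^{2x}-1)}{(1+\te^{x})^4}>0$ for $0<x<\ln(2+\sqrt3)$. Hence $(-1)^4[\ln f]^{(4)}(x)<0$ there. Equivalently, for $x>0$ we have $\frac{1}{1+\te^{x}}=\sum_{k\ge1}(-1)^{k-1}\te^{-kx}$. This is the Laplace transform of a signed measure with negative atoms, so by uniqueness in Bernstein's theorem it is not completely monotonic.

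The same computation disposes of your other two candidates. For $f=\te^{-x}+\te^{-2x}$ one gets $-(\ln f)'=1+\frac{1}{1+\te^{x}}$. For $f=\te^{-x}+\te^{-3x}$ one gets $-(\ln f)'=1+\frac{2}{1+\te^{2x}}$.
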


\begin{lem}[{\cite[p.~457]{alzer-berg}, \cite[Remark~1.4]{CBerg}, \cite[Proposition~1.3]{Berg-Pedersen-JCAM-2001}, and~\cite[p.~507]{berg-pedersen-rocky}}] \label{Stieltjes-thm}
If $f(x)\ne0$ is a Stieltjes function, then so are the functions $\frac1{xf(x)}$ and $\frac{1}{f(\frac{1}{x})}$.
\end{lem}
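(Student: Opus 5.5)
We prove the two closure properties of Lemma~\ref{Stieltjes-thm} directly from the integral representation in Definition~\ref{dfn-stieltjes}. The tool is the standard characterization of Stieltjes functions through the complex plane. A function $f\colon\mathbb{R}_+\to\mathbb{R}_0$ is Stieltjes if and only if it extends holomorphically to the cut plane $\mathbb{C}\setminus(-\infty,0]$ and satisfies $\Im f(z)\le0$ whenever $\Im z>0$ (see \cite{Schilling-Song-Vondracek-2nd}). Necessity is immediate from the representation, since
\begin{equation*}
\Im\frac1{s+z}=-\frac{\Im z}{|s+z|^2}
\end{equation*}
and $\Im\frac az=-\frac{a\Im z}{|z|^2}$. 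Sufficiency follows from the Nevanlinna--Pick (Herglotz) representation of $-f$ in the upper half-plane. One combines this with positivity of $f$ on $\mathbb{R}_+$ to show that the representing measure lives on $(-\infty,0]$. The linear term then vanishes, which gives exactly the form $\frac ax+b+\int_0^\infty\frac{\td\sigma(s)}{s+x}$.

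\textbf{Step 1: $f$ has no zeros in the cut plane.} Let $f\ne0$ be Stieltjes. Then $a$, $b$, $\sigma$ are not all zero, so for $\Im z>0$ we get $\Im f(z)<0$, and by conjugate symmetry $\Im f(z)>0$ for $\Im z<0$. For $x>0$ we have $f(x)>0$. Hence $f$ has no zeros in $\mathbb{C}\setminus(-\infty,0]$.

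\textbf{Step 2: the function $\frac{1}{xf(x)}$.} I would not work with $\frac1{zf(z)}$ directly. Instead I compute the argument of $zf(z)$ for $\Im z>0$. From the representation,
\begin{equation*}
zf(z)=a+bz+\int_0^\infty\frac{z}{s+z}\td\sigma(s),
\qquad
\Im\frac{z}{s+z}=\frac{s\Im z}{|s+z|^2}\ge0 .
\end{equation*}
So $\Im(zf(z))\ge0$ in the upper half-plane. Moreover, $\arg f(z)\in(-\pi,0]$ and $\arg z\in(0,\pi)$, so $\arg(zf(z))\in(-\pi,\pi)$. Combining these, $zf(z)$ lies in the closed upper half-plane and is nonzero by Step~1. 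Therefore $\Im\frac1{zf(z)}\le0$. Since $\frac1{zf(z)}$ is holomorphic on the cut plane and positive on $\mathbb{R}_+$, the characterization shows it is Stieltjes.

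\textbf{Step 3: the function $\frac1{f(1/x)}$.} The map $z\mapsto1/z$ sends the cut plane to itself and the upper half-plane to the lower half-plane. For $\Im z>0$ we therefore have $\Im(1/z)<0$, hence $\Im f(1/z)\ge0$ by conjugate symmetry. Since $f(1/z)\ne0$ by Step~1, it follows that $\Im\frac1{f(1/z)}\le0$. Positivity on $\mathbb{R}_+$ is clear, so $\frac1{f(1/x)}$ is Stieltjes.

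\textbf{Main obstacle.} The hard part is the sufficiency half of the complex characterization. Extracting the representation of Definition~\ref{dfn-stieltjes} from the sign condition needs the Herglotz representation and a check that the measure has no mass on $(0,\infty)$; the latter uses holomorphy across $\mathbb{R}_+$, via the Stieltjes inversion formula. This is standard and can be cited from \cite[Chapter~7]{Schilling-Song-Vondracek-2nd}. The argument-tracking in Step~2 is the only delicate point elsewhere.
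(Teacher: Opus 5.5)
The paper gives no proof of this lemma; it only cites Alzer--Berg, Berg, and Berg--Pedersen. Your complex-variable argument is correct, up to the small points below. Given what the paper already assumes, though, there is a shorter route from the lemmas it states:
\begin{itemize}
\item If $f\ne0$ is Stieltjes, then $\frac1f$ is a complete Bernstein function by Lemma~\ref{Bern2nd-Thm7.3}. By the equivalence of the first two claims in Lemma~\ref{Bern2nd-Thm6.2}, $\frac{1/f(x)}{x}=\frac1{xf(x)}$ is then Stieltjes.
\item Substituting into Definition~\ref{dfn-stieltjes} gives $\frac1xf\bigl(\frac1x\bigr)=a+\frac bx+\int_0^\infty\frac{1}{x+1/s}\frac{\td\sigma(s)}{s}$. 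This is again Stieltjes: the image of $\frac{\td\sigma(s)}{s}$ under $s\mapsto\frac1s$ satisfies the integrability condition because $\int_0^\infty\frac{\td\sigma(s)}{1+s}<\infty$. Applying the first part to this function yields $\frac1{f(1/x)}$.
\end{itemize}
Your route exposes the mechanism: the sign of $\Im$ under $w\mapsto\frac1w$, $z\mapsto zf(z)$ and $z\mapsto\frac1z$. However, the sufficiency half of the complex characterization, which you rightly flag as the hard part, is exactly the Herglotz/Stieltjes-inversion content behind Lemmas~\ref{Bern2nd-Thm6.2} and~\ref{Bern2nd-Thm7.3}. So the algebraic route costs less inside this paper.

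Small points to fix:
\begin{itemize}
\item In Step~1, the claim $\Im f(z)<0$ for $\Im z>0$ is false when $f\equiv b>0$, i.e.\ $a=0$ and $\sigma=0$. The no-zeros conclusion is still trivially true in that case, so just treat it separately.
\item The argument-range bookkeeping in Step~2 is superfluous. The facts $\Im(zf(z))\ge0$ and $zf(z)\ne0$ already give $\Im\frac1{zf(z)}=-\frac{\Im(zf(z))}{|zf(z)|^2}\le0$.
\item In your sketch of sufficiency, positivity is not what confines the Herglotz measure to $(-\infty,0]$. That comes from holomorphy and realness across $(0,\infty)$, as your last paragraph correctly says.
\item Positivity on $\mathbb{R}_+$ does two other jobs that your sketch leaves implicit. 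It kills the linear term; without it, $f(z)=-z$ would pass the sign test. It also gives $\int_0^\infty\frac{\td\sigma(s)}{1+s}<\infty$ and a constant term $b\ge0$, by monotone convergence as $x\to\infty$.
\end{itemize}
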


\begin{lem}[{\cite[Theorem~3.2]{Schilling-Song-Vondracek-2nd}}]\label{Bernstein-thm3.2}
A function $f:\mathbb{R}_+\to\mathbb{R}_0$ is a Bernstein function if and only if it admits the representation
\begin{equation}\label{Levy-Khintchine-repreentation}
f(x)=a+bx+\int_0^\infty\bigl(1-\te^{-xt}\bigr)\td\sigma(t),
\end{equation}
where $a,b\ge0$ and $\sigma$ is a measure on $\mathbb{R}_+$ satisfying $\int_0^\infty\min\{1,t\}\td\sigma(t)<\infty$.
In particular, the triplet $(a,b,\sigma)$ determines $f$ uniquely and vice versa.
\end{lem}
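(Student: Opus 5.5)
The statement is the Lévy--Khintchine characterization: $f$ is a Bernstein function on $\mathbb{R}_+$ if and only if it has the representation~\eqref{Levy-Khintchine-repreentation} with a unique triplet $(a,b,\sigma)$. I would prove the two directions separately and then uniqueness, with Bernstein's theorem as the main tool. Bernstein's theorem says that every completely monotonic function on $\mathbb{R}_+$ is the Laplace transform of a unique nonnegative measure.

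\emph{Sufficiency.} Assume $f$ has the form~\eqref{Levy-Khintchine-repreentation}. Then $f\ge0$. Differentiation under the integral sign is justified by dominated convergence, using $t\te^{-xt}\le C_x\min\{1,t\}$ for $x$ in compact subsets of $\mathbb{R}_+$. This gives
\begin{equation*}
f'(x)=b+\int_0^\infty t\te^{-xt}\td\sigma(t).
\end{equation*}
Every further derivative carries the factor $(-t)^n$, so $(-1)^n\bigl(f'\bigr)^{(n)}\ge0$ for all $n\in\mathbb{N}_0$. Hence $f'$ is completely monotonic and $f$ is a Bernstein function in the sense of Definition~\ref{Bernstein-dfn}.

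\emph{Necessity.} Let $f$ be Bernstein. Since $f'$ is completely monotonic on $\mathbb{R}_+$, Bernstein's theorem gives a measure $\mu$ on $[0,\infty)$ with $f'(x)=\int_{[0,\infty)}\te^{-xt}\td\mu(t)$. Put $b=\mu(\{0\})$ and $\td\sigma(t)=t^{-1}\td\mu(t)$ on $(0,\infty)$. Because $f\ge0$ is increasing, the limit $a=f(0+)\ge0$ exists. For $0<\varepsilon<x$,
\begin{equation*}
f(x)-f(\varepsilon)=\int_\varepsilon^x f'(s)\td s=b(x-\varepsilon)+\int_0^\infty\bigl(\te^{-\varepsilon t}-\te^{-xt}\bigr)\td\sigma(t),
\end{equation*}
by Tonelli's theorem. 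Letting $\varepsilon\to0^+$ and using monotone convergence yields~\eqref{Levy-Khintchine-repreentation}. The left side stays finite, which forces $\int_0^\infty(1-\te^{-t})\td\sigma(t)<\infty$. Since $1-\te^{-t}\asymp\min\{1,t\}$, this is exactly the required integrability condition.

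\emph{Uniqueness.} First, $a=f(0+)$ is determined by $f$. Next, $b=\lim_{x\to\infty}f(x)/x$: the integral term divided by $x$ tends to $0$ by dominated convergence, since $(1-\te^{-xt})/x\le\min\{t,1/x\}$. Finally, $f'(x)-b$ is the Laplace transform of the measure $t\,\td\sigma(t)$. By the uniqueness part of Bernstein's theorem this measure is determined, hence so is $\sigma$ on $\mathbb{R}_+$. Conversely, any triplet determines $f$ directly through~\eqref{Levy-Khintchine-repreentation}.

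The main obstacle is the necessity direction. One must pass from $f'$ back to $f$ while controlling the mass of $\sigma$ near $t=0$ and $t=\infty$. This is handled by integrating from $\varepsilon$ rather than from $0$ and using monotone convergence, with the finiteness of $f$ supplying the integrability of $\min\{1,t\}$. Everything else rests on the classical Bernstein theorem, which I would quote from~\cite{Schilling-Song-Vondracek-2nd,widder}.
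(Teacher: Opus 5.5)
Your proof is correct and is the standard argument: apply Bernstein's theorem to the completely monotonic derivative $f'$, then integrate back from $\varepsilon$ using Tonelli and monotone convergence to obtain $a=f(0+)$, $b=\mu(\{0\})$ and $\td\sigma(t)=t^{-1}\td\mu(t)$, and read off uniqueness from $a=f(0+)$, $b=\lim_{x\to\infty}f(x)/x$ and Laplace uniqueness for $t\,\td\sigma(t)$. The paper gives no proof of its own and simply quotes \cite[Theorem~3.2]{Schilling-Song-Vondracek-2nd}, whose proof proceeds in essentially this way; your one terse step, that any representation forces $a=f(0+)$, follows by dominated convergence from $1-\te^{-xt}\le\min\{1,t\}$ for $0<x\le1$.
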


\begin{lem}[{\cite[pp.~69--70, Theorem~6.2]{Schilling-Song-Vondracek-2nd}}]\label{Bern2nd-Thm6.2}
Suppose that $f$ is a non-negative function on $\mathbb{R}_+$. Then the following claims are equivalent.
\begin{enumerate}
\item
The function $f$ is a complete Bernstein function.
\item
The function $\frac{f(x)}{x}$ is a Stieltjes function.
\item
There exists a Bernstein function $g$ such that
\begin{equation*}
f(x)=x^2\mathscr{L}(g;x), \quad x>0,
\end{equation*}
where $\mathscr{L}$ stands for the Laplace transform.
\item
The function $f$ has an analytic continuation to $\mathbb{H}^\uparrow=\{z\in\mathbb{C}:\Im z>0\}$ such that $\Im f(z)\ge0$ for all $z\in\mathbb{H}^\uparrow$ and such that the limit $f(0+)=\lim_{\mathbb{R}_+\ni x\to0}f(x)$ exists and is real.
\item
The function $f$ has an analytic continuation to the cut complex plane $\mathbb{C}\setminus(-\infty,0]$ such that $(\Im z)[\Im f(z)]\ge0$ and the limit $f(0+)=\lim_{\mathbb{R}_+\ni x\to0}f(x)$ exists and is real.
\item
The function $f$ has an analytic continuation to $\mathbb{H}^\uparrow$ which is given by
\begin{equation*}
f(z)=a+bz+\int_{\mathbb{R}_+} \frac{z}{z+t}\td\sigma(t),
\end{equation*}
where $a,b\ge0$ are non-negative constants and $\sigma$ is a measure on $\mathbb{R}_+$ such that $\int_{\mathbb{R}_+} \frac{1}{1+t}\td\sigma(t)<\infty$.
\end{enumerate}
\end{lem}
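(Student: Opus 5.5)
We prove the equivalences by first treating (1), (2), (3) and (6) with explicit Laplace-transform computations, and then closing the loop with (4) and (5) via the Nevanlinna--Pick representation of Pick functions.

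\textbf{(1)$\Leftrightarrow$(2).} Start from the representation of a complete Bernstein function, $f(x)=a+bx+\int_0^\infty(1-\te^{-xt})m(t)\td t$. Since $m$ is completely monotonic, Bernstein's theorem gives $m(t)=\int_{\mathbb{R}_+}\te^{-ts}\td\mu(s)$ for a nonnegative measure $\mu$. Inserting this and using Fubini together with
\begin{equation*}
\int_0^\infty\bigl(1-\te^{-xt}\bigr)\te^{-st}\td t=\frac1s-\frac1{s+x}=\frac{x}{s(s+x)}
\end{equation*}
yields
\begin{equation*}
\frac{f(x)}{x}=\frac ax+b+\int_{\mathbb{R}_+}\frac1{s+x}\,\frac{\td\mu(s)}{s}.
\end{equation*}
This is a Stieltjes function with $\td\sigma(s)=\td\mu(s)/s$ in Definition~\ref{dfn-stieltjes}. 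The integrability condition $\int\frac{1}{1+s}\td\sigma(s)<\infty$ should be equivalent to $\int_0^\infty\min\{1,t\}m(t)\td t<\infty$, since $\int_0^\infty\min\{1,t\}\te^{-st}\td t\asymp\frac{1}{s(1+s)}$. Reading the computation backwards gives (2)$\Rightarrow$(1).

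\textbf{(2)$\Leftrightarrow$(6).} Multiplying the Stieltjes representation of $f(x)/x$ by $x$ gives $f(x)=a+bx+\int\frac{x}{x+s}\td\sigma(s)$. The right-hand side is analytic in $z$ off $(-\infty,0]$, which supplies the continuation required in (6). The converse is the same computation read in reverse.

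\textbf{(3)$\Leftrightarrow$(6).} Take a Bernstein function $g(t)=a'+b't+\int(1-\te^{-ts})\td\rho(s)$ from Lemma~\ref{Bernstein-thm3.2}. Its Laplace transform is
\begin{equation*}
\mathscr{L}(g;x)=\frac{a'}{x}+\frac{b'}{x^2}+\int\Bigl(\frac1x-\frac1{x+s}\Bigr)\td\rho(s),
\end{equation*}
so that
\begin{equation*}
x^2\mathscr{L}(g;x)=b'+a'x+\int\frac{x}{x+s}\,s\td\rho(s).
\end{equation*}
This is exactly the form (6) with $\td\sigma(s)=s\td\rho(s)$. Moreover, $\int\min\{1,s\}\td\rho(s)<\infty$ holds if and only if $\int\frac{1}{1+s}\td\sigma(s)<\infty$, so the correspondence runs in both directions.

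\textbf{(6)$\Rightarrow$(5)$\Rightarrow$(4).} Given (6), the representation itself is the analytic continuation to $\mathbb{C}\setminus(-\infty,0]$. The sign condition follows from
\begin{equation*}
\Im\frac{z}{z+t}=\frac{t\,\Im z}{|z+t|^2},
\end{equation*}
which has the sign of $\Im z$. Monotone convergence gives $f(0+)=a$. The step (5)$\Rightarrow$(4) is just restriction to $\mathbb{H}^\uparrow$.

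\textbf{(4)$\Rightarrow$(6): the main obstacle.} Here $f$ is a Pick function, so it admits the Nevanlinna--Pick representation
\begin{equation*}
f(z)=\alpha+\beta z+\int_{\mathbb{R}}\Bigl(\frac1{t-z}-\frac t{1+t^2}\Bigr)\td\nu(t),\qquad \beta\ge0,\ \nu\ge0.
\end{equation*}
The plan has three parts.
\begin{enumerate}
\item Because $f$ extends analytically across $(0,\infty)$ with real values there, the Stieltjes inversion formula gives $\nu((0,\infty))=0$.
\item Substituting $t=-s$, recombine the kernel into $\frac{z}{z+s}$ by absorbing constants into $\alpha$. This is possible precisely when $\int\frac{\td\nu(-s)}{s(1+s)}<\infty$.
\item Obtain that finiteness, and the nonnegativity $a=f(0+)\ge0$, from the existence and reality of the limit $f(0+)$ together with $f\ge0$ on $\mathbb{R}_+$, using monotone convergence as $x\to0^+$. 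This also has to rule out any atom of $\nu$ at $0$.
\end{enumerate}
Controlling the behaviour near $0$ in the third part is the delicate point. After it, the measure $\td\sigma(s)=\td\nu(-s)/s$ is the one in (6).
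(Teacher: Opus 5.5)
The paper does not prove this lemma. It is quoted from \cite[Theorem~6.2]{Schilling-Song-Vondracek-2nd}, so there is no in-paper argument to compare with. Judged on its own, your scheme (1)$\Leftrightarrow$(2)$\Leftrightarrow$(6)$\Leftrightarrow$(3) together with (6)$\Rightarrow$(5)$\Rightarrow$(4)$\Rightarrow$(6) is correct and is the standard argument. The Fubini and Laplace-transform computations check out, as do the identifications $\td\sigma(s)=\td\mu(s)/s$ and $\td\sigma(s)=s\,\td\rho(s)$. So do the comparisons $\int_0^\infty\min\{1,t\}\te^{-st}\td t=\frac{1-\te^{-s}}{s^2}\asymp\frac{1}{s(1+s)}$ and $\frac{s}{1+s}\asymp\min\{1,s\}$.

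A few points should be made explicit.

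First, Bernstein's theorem gives $m(t)=\int_{[0,\infty)}\te^{-ts}\td\mu(s)$. An atom at $0$ is ruled out because it would make $\int_1^\infty m(t)\td t=\infty$.

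Second, step (a) of (4)$\Rightarrow$(6) needs $f$ holomorphic across $(0,\infty)$. If the continuation in (4) is only known to attain the real values $f(x)$ continuously there, the Schwarz reflection principle supplies this.

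Third, step (c), which you left as a plan, goes through exactly as you intend. Write $\tilde\nu$ for the image of $\nu|_{(-\infty,0)}$ under $t\mapsto-t$. Then for $x>0$
\begin{equation*}
f(x)=\alpha+\beta x-\frac{\nu(\{0\})}{x}+\int_{(0,\infty)}\biggl(\frac{s}{1+s^2}-\frac{1}{s+x}\biggr)\td\tilde\nu(s).
\end{equation*}
The following facts make monotone convergence available as $x\to0^+$:
\begin{enumerate}
\item Every $x$-dependent term increases with $x$.
\item For $0<x\le1$ the integrand is bounded above by $\frac{s-1}{(1+s)(1+s^2)}$. Its absolute value is at most $\frac{1}{1+s^2}$, so it is $\tilde\nu$-integrable.
\item The key identity is $\frac{1}{s}-\frac{s}{1+s^2}=\frac{1}{s(1+s^2)}$.
\end{enumerate}
Monotone convergence then gives $f(0+)=-\infty$ if $\nu(\{0\})>0$, and $f(0+)=\alpha-\int\frac{\td\tilde\nu(s)}{s(1+s^2)}$ otherwise. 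So finiteness of $f(0+)$ forces $\nu(\{0\})=0$ and $\int\frac{\td\tilde\nu(s)}{s(1+s^2)}<\infty$. Combined with $\int\frac{\td\nu(t)}{1+t^2}<\infty$, this gives $\int\frac{\td\sigma(s)}{1+s}<\infty$ for $\td\sigma(s)=\frac{\td\tilde\nu(s)}{s}$. Finally, $a=f(0+)\ge0$ follows from $f\ge0$ on $\mathbb{R}_+$.
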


\begin{lem}[{\cite[p.~93, Theorem~7.3]{Schilling-Song-Vondracek-2nd}}]\label{Bern2nd-Thm7.3}
A (non-trivial) function $f$ is a complete Bernstein function if and only if its reciprocal $\frac{1}{f}$ is a (non-trivial) Stieltjes function.
\end{lem}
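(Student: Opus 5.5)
The plan is to derive the equivalence directly from Lemma~\ref{Bern2nd-Thm6.2} together with Lemma~\ref{Stieltjes-thm}, without going back to integral representations. The key observation is that the map $s\mapsto\frac{1}{xs(x)}$ carries the Stieltjes function $\frac{f(x)}{x}$ to $\frac1{f(x)}$, and carries $\frac1{f(x)}$ back to $\frac{f(x)}{x}$.

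First I will settle positivity, so that all reciprocals make sense. A non-trivial Stieltjes function $s$ is strictly positive on $\mathbb{R}_+$. Indeed, in the representation of Definition~\ref{dfn-stieltjes}, each of the three terms $\frac ax$, $b$ and $\int_0^\infty\frac{\td\sigma(s)}{s+x}$ is nonnegative. If $s(x_0)=0$ at some point, then $a=b=0$ and $\sigma(\mathbb{R}_+)=0$, so $s\equiv0$. For the same reason, a non-trivial complete Bernstein function $f$ satisfies $f>0$ on $\mathbb{R}_+$. This follows by applying the previous remark to the Stieltjes function $\frac{f(x)}{x}$ supplied by item~(2) of Lemma~\ref{Bern2nd-Thm6.2}.

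For the direction ($\Rightarrow$), let $f$ be a non-trivial complete Bernstein function. By Lemma~\ref{Bern2nd-Thm6.2}~(1)$\Rightarrow$(2), the function $s(x)=\frac{f(x)}{x}$ is a nonzero Stieltjes function. Lemma~\ref{Stieltjes-thm} then shows that $\frac1{xs(x)}=\frac1{f(x)}$ is a Stieltjes function. It is non-trivial because $f$ is finite on $\mathbb{R}_+$.

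For the direction ($\Leftarrow$), let $g=\frac1f$ be a non-trivial Stieltjes function. Then $g>0$ on $\mathbb{R}_+$, so $f=\frac1g$ is a well-defined nonnegative function on $\mathbb{R}_+$. Lemma~\ref{Stieltjes-thm} gives that $\frac1{xg(x)}=\frac{f(x)}{x}$ is a Stieltjes function. Since $f\ge0$ on $\mathbb{R}_+$, the implication (2)$\Rightarrow$(1) in Lemma~\ref{Bern2nd-Thm6.2} applies and shows that $f$ is a complete Bernstein function. It is non-trivial because $f>0$.

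The only delicate point is the bookkeeping around the word ``non-trivial''. Lemma~\ref{Stieltjes-thm} requires $f\ne0$, and Lemma~\ref{Bern2nd-Thm6.2} requires $f\ge0$. The positivity step in the second paragraph is exactly what guarantees these hypotheses. Everything else is a direct chaining of the cited results.
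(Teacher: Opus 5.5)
Your reduction is correct. The positivity step is exactly what makes the reciprocals meaningful, and it supplies the hypotheses ``$f\ne0$'' of Lemma~\ref{Stieltjes-thm} and ``$f\ge0$'' of Lemma~\ref{Bern2nd-Thm6.2}; with those in place, both chains of implications go through.

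The paper gives no proof of its own; it simply quotes Schilling--Song--Vondra\v{c}ek. So the meaningful comparison is with a genuine proof. Note that your argument has no independent content. Granting (1)$\Leftrightarrow$(2) of Lemma~\ref{Bern2nd-Thm6.2} and putting $s(x)=\frac{f(x)}{x}$, the statement is logically equivalent to the first half of Lemma~\ref{Stieltjes-thm}, because $s\mapsto\frac{1}{xs(x)}$ is an involution. You have therefore shown that two quoted results coincide. The argument is non-circular only as long as Lemma~\ref{Stieltjes-thm} is treated as an independently established black box; in some treatments it is deduced from precisely this statement.

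A self-contained proof needs only Lemma~\ref{Bern2nd-Thm6.2}, and runs as follows.
\begin{enumerate}
\item Take a non-trivial Stieltjes function $g(z)=\frac{a}{z}+b+\int_0^\infty\frac{\td\sigma(s)}{s+z}$. Then $\Im g(z)=-(\Im z)\bigl[\frac{a}{|z|^2}+\int_0^\infty\frac{\td\sigma(s)}{|s+z|^2}\bigr]$.
\item Hence $g$ has no zeros in $\mathbb{C}\setminus(-\infty,0]$: either $g\equiv b>0$, or $\Im g(z)\ne0$ off the real axis, and in any case $g>0$ on $\mathbb{R}_+$.
\item So $\frac{1}{g}$ is holomorphic there, with $(\Im z)\Im\frac{1}{g(z)}=-\frac{(\Im z)\Im g(z)}{|g(z)|^2}\ge0$.
\item Since $g$ is positive and decreasing, $\lim_{x\to0^+}\frac{1}{g(x)}$ exists in $[0,\infty)$. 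Item~(5) of Lemma~\ref{Bern2nd-Thm6.2} then gives ($\Leftarrow$).
\item For ($\Rightarrow$), apply this to the non-trivial Stieltjes function $\frac{f(x)}{x}$. This shows that $\frac{x}{f(x)}$ is complete Bernstein, and (1)$\Rightarrow$(2) then gives that $\frac{1}{f(x)}$ is Stieltjes.
\end{enumerate}

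This route costs a few lines of complex analysis, but it produces the first half of Lemma~\ref{Stieltjes-thm} as a by-product rather than consuming it as an input. Yours is shorter and handles the non-triviality bookkeeping cleanly, but it only relocates the substance.
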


\section{A new lemma}
This section establishes a positivity result for a function built from the convolution of the exponential integral functions $E_1$ and $E_2$; this result underlies the proof of Theorem~\ref{h(x)Bern-Thm}. Recall that the exponential integral $E_n(z)$ is defined in~\cite[p.~228, Entry~5.1.4]{abram} by
\begin{equation*}
E_n(z)=\int_1^{\infty} \frac{\te^{-sz}}{s^n}\, \td s,
\quad n\in\mathbb{N}_0,\quad \Re(z)>0.
\end{equation*}

\begin{lem}\label{G(t)-lem}
The function
\begin{equation*}
\mathfrak{G}(t)
=2\int_0^t \te^{\tau} E_2(\tau)E_1(t-\tau)\td\tau
-\bigl(\te^t-1\bigr)E_1(t)
\end{equation*}
is positive for $t\in\mathbb{R}_+$.
\end{lem}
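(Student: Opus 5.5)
The plan is to turn $\mathfrak{G}(t)$ into a single integral over the exponential-integral parameter and then prove the integrand, or a rearrangement of it, is positive. I would begin from the representations
\begin{equation*}
\te^{\tau}E_2(\tau)=\int_1^\infty\frac{\te^{-(u-1)\tau}}{u^2}\td u,\qquad
E_1(t-\tau)=\int_1^\infty\frac{\te^{-v(t-\tau)}}{v}\td v .
\end{equation*}
Substituting them into the convolution and doing the elementary $\tau$-integral first (Fubini applies since everything is nonnegative) gives
\begin{equation*}
2\int_0^t \te^{\tau}E_2(\tau)E_1(t-\tau)\td\tau
=2\int_1^\infty\!\!\int_1^\infty \frac{\te^{-(u-1)t}-\te^{-vt}}{u^2v\,(v-u+1)}\td u\td v .
\end{equation*}
The singularity at $v=u-1$ is removable, and the integrand is positive. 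Similarly,
\begin{equation*}
\bigl(\te^t-1\bigr)E_1(t)=\int_1^\infty\frac{\te^{-(v-1)t}-\te^{-vt}}{v}\td v .
\end{equation*}
The goal is to write $\mathfrak{G}(t)$ as one integral of a nonnegative, not identically zero, kernel.

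As a guide and a consistency check, I would compute the Laplace transforms. We have $\mathscr{L}(E_1;s)=\frac{\ln(1+s)}{s}$ and $\mathscr{L}(\te^{t}E_2(t);s)=\frac1{s-1}-\frac{\ln s}{(s-1)^2}$, the latter by partial fractions in $u$. Also $\mathscr{L}((\te^t-1)E_1(t);s)=\frac{\ln s}{s-1}-\frac{\ln(1+s)}{s}$. Therefore
\begin{equation*}
\mathscr{L}(\mathfrak{G};s)=\frac{2\ln(1+s)}{s}\biggl[\frac1{s-1}-\frac{\ln s}{(s-1)^2}\biggr]-\frac{\ln s}{s-1}+\frac{\ln(1+s)}{s}.
\end{equation*}
This does not prove positivity by itself. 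It does fix the small-$t$ and large-$t$ behaviour through Tauberian arguments, and it checks the constants in the double-integral form.

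For the main step I would carry out the $u$-integration in the first term explicitly for fixed $v$. The $\te^{-vt}$ part integrates in closed form in terms of logarithms of $v$. The $\te^{-(u-1)t}$ part is better handled by first integrating in $v$ for fixed $u$, which produces $\ln$-type weights. After relabelling, both terms are combined against $\int_1^\infty \frac{\te^{-(w-1)t}}{w}\td w$ and $\int_1^\infty\frac{\te^{-wt}}{w}\td w$. This yields
\begin{equation*}
\mathfrak{G}(t)=\int_0^\infty \te^{-wt}\,\rho_t(w)\td w
\end{equation*}
with an explicit elementary weight. If the weight turns out independent of $t$, positivity of $\rho$ settles everything and in fact shows $\mathfrak{G}$ is completely monotonic. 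That is consistent with $\mathscr{L}(\mathfrak{G};s)$ being expressible as a Stieltjes-type function: the inverse transform of the above expression along the cut $(-\infty,0]$ gives $\rho(w)=\frac1\pi\Im\,\mathscr{L}(\mathfrak{G};-w-\mathrm{i}0)$, which I would compute directly as an alternative route.

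The main obstacle is showing that this density $\rho(w)$, an elementary combination of $\ln w$, $\ln|1-w|$, $\pi$ and rational functions of $w$, is nonnegative on $(0,\infty)$, with separate pieces on $(0,1)$ and $(1,\infty)$. I would first reduce this to the sign of a single function of $w$ by multiplying out the rational factors. I would then check the endpoints $w\to0^+$, $w\to1$ and $w\to\infty$ by series expansion. Finally I would prove monotonicity of the reduced function by differentiating once or twice until the derivative's sign is evident. If a $t$-independent density is not obtainable, I would fall back on a direct argument: handle small $t$ by series expansion, handle large $t$ via the asymptotics $E_n(t)\sim \te^{-t}/t$, and cover the middle range by monotonicity of the combined kernel in $t$.
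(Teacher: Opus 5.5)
Your Laplace transforms and the double-integral formula are correct. The gap is that the step you call the main obstacle asks you to prove something false. No nonnegative $t$-independent weight exists, because $\mathfrak{G}$ is not completely monotonic. Since $E_2\le1$ and $\int_0^tE_1(s)\td s=tE_1(t)+1-\te^{-t}$, both terms of $\mathfrak{G}$ tend to $0$ as $t\to0^+$. More precisely, $\mathfrak{G}(t)=-t\ln t+O(t)$, so $\mathfrak{G}(0^+)=0<\mathfrak{G}(t)$ for small $t>0$, and $\mathfrak{G}$ cannot be decreasing; your own small-$t$ check would have shown this. If $\mathfrak{G}(t)=\int_0^\infty\te^{-wt}\rho(w)\td w$ with $\rho\ge0$ and $\rho\not\equiv0$, monotone convergence would give $\mathfrak{G}(0^+)=\int_0^\infty\rho(w)\td w>0$. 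On the transform side, $\mathscr{L}(\mathfrak{G};s)\sim\frac{\ln s}{s^2}$ as $s\to\infty$, which decays faster than $\frac1s$, so it is not a nonzero Stieltjes function. Your inversion $\rho(w)=\frac1\pi\Im\mathscr{L}(\mathfrak{G};-w-\mathrm{i}0)$ does give a $t$-independent weight, namely
\begin{equation*}
\rho(w)=-\frac{2\ln(1-w)}{w(1+w)^2}-\frac{1}{1+w}\quad(0<w<1),\qquad
\rho(w)=-\frac{2\ln[w(w-1)]}{w(1+w)^2}-\frac{1}{w(1+w)}\quad(w>1).
\end{equation*}
However, it is negative for $w>w_0\approx1.257$; for instance $\rho(w)\sim-\frac1{w^2}$ at infinity. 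The fallback does not close the gap. The relations $E_n(t)\sim\te^{-t}/t$ give no explicit threshold, and the ``monotonicity of the combined kernel'' for the middle range is never specified.

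The idea your framework is missing is to use the sign pattern and zero total mass of $\rho$, not its positivity. Since $\rho\in L^1(0,\infty)$, dominated convergence gives $\int_0^\infty\rho(w)\td w=\mathfrak{G}(0^+)=0$. On $(0,1)$, the bound $-\ln(1-w)\ge w$ gives $\rho(w)\ge\frac{1-w}{(1+w)^2}>0$. On $(1,\infty)$, $\rho$ has the sign opposite to $2\ln[w(w-1)]+1+w$, which increases strictly from $-\infty$ to $\infty$. So $\rho$ changes sign exactly once, at $w_0$, and
\begin{equation*}
\mathfrak{G}(t)=\int_0^\infty\bigl(\te^{-wt}-\te^{-w_0t}\bigr)\rho(w)\td w>0,\quad t>0,
\end{equation*}
with nonnegative integrand. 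Two points still need care. You must justify the representation, for instance by checking that $\int_0^\infty\frac{\rho(w)}{s+w}\td w$ equals your $\mathscr{L}(\mathfrak{G};s)$. Also, splitting your double integral into its two exponential parts creates a non-integrable pole along $u=v+1$, so principal values are needed there. Completed this way, your route is a numerics-free alternative to the paper's proof, which stays in the $t$-domain. For $t\le0.26$ the paper uses $E_1(t-\tau)\ge E_1(t)$ and the decrease of $E_2$ to get $\mathfrak{G}(t)\ge2(\te^t-1)E_1(t)\bigl[E_2(t)-\frac12\bigr]$. For $t\ge0.26$ it integrates by parts to get $\mathfrak{G}(t)\ge(\te^t-1)[2E_2(t)-E_1(t)]$, then uses the increase of $E_2/E_1$ and the value $E_2(0.26)/E_1(0.26)>\frac12$.
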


\begin{proof}
It is clear that the exponential integral functions $E_1(x)$ and $E_2(x)$ are both positive and decreasing on $\mathbb{R}_+$. Hence,
$$
E_1(t-\tau)\ge E_1(t), \quad 0\le \tau<t.
$$
Accordingly,
\begin{multline*}
\int_0^t \te^{\tau} E_2(\tau)E_1(t-\tau)\td\tau
\ge E_1(t)\int_0^t \te^{\tau} E_2(\tau)\td\tau\\
\ge E_1(t)E_2(t)\int_0^t \te^{\tau}\td\tau
= E_1(t)E_2(t)\bigl(\te^t-1\bigr).
\end{multline*}
As a result,
$$
\mathfrak{G}(t)
=2\int_0^t \te^{\tau} E_2(\tau)E_1(t-\tau)\td\tau
-\bigl(\te^t-1\bigr)E_1(t)
\ge 2\bigl(\te^t-1\bigr)E_1(t)\biggl[E_2(t)-\frac{1}{2}\biggr].
$$
Since $E_2(t)$ is decreasing on $\mathbb{R}_+$, there exists a unique point $t_0\in\mathbb{R}_+$ such that
$E_2(t_0)=\frac{1}{2}$ and $E_2(t)\ge \frac{1}{2}$ for $t\in(0,t_0)$.
By numerical evaluations of the function $E_2(x)-x\ln x$ from $0.00$ to $0.50$ with step $0.01$ (see the first two columns of Table~5.4 in~\cite[p.~245]{abram}), we can therefore conclude that $E_2(0.26)=0.50744\dotsc$ and $E_2(0.27)=0.49744\dotsc$, which means that $t_0\in(0.26,\,0.27)$.
Consequently, the positivity $\mathfrak{G}(t)>0$ is valid for $t\in(0,0.26]$.
\par
Since $E_{n}'(x)=-E_{n-1}(x)$ for $n\in\mathbb{N}$ (see~\cite[p.~230, Entry~5.1.26]{abram}) and $E_2(0)=1$, integration by parts gives
\begin{align*}
&\quad\int_0^t \te^{\tau} E_2(\tau)E_1(t-\tau)\td\tau\\
&= \bigl[\te^{\tau} E_2(\tau)E_2(t-\tau)\bigr]\big|_{\tau=0}^{\tau=t}
+\int_0^t \te^{\tau} E_2(t-\tau)[E_1(\tau)-E_2(\tau)]\td\tau \\
&= \bigl(\te^t-1\bigr)E_2(t)
+\int_0^t \te^{\tau} E_2(t-\tau)[E_1(\tau)-E_2(\tau)]\td\tau.
\end{align*}
Thus,
$$
\mathfrak{G}(t)
= 2\bigl(\te^t-1\bigr)E_1(t)\biggl[\frac{E_2(t)}{E_1(t)}-\frac{1}{2}\biggr]
+2\int_0^t \te^{\tau} E_2(t-\tau)[E_1(\tau)-E_2(\tau)]\td\tau.
$$
Since $E_1(\tau)>E_2(\tau)$ (see~\cite[p.~229, Entry~5.1.17]{abram}) and $E_2(\tau)>0$ for $\tau\in\mathbb{R}_+$, we have
$$
2\int_0^t \te^{\tau} E_2(t-\tau)[E_1(\tau)-E_2(\tau)]\td\tau \ge 0,
\quad t\in\mathbb{R}_+.
$$
Since $\bigl[\frac{E_{n}(x)}{E_{n-1}(x)}\bigr]'>0$ for $x\in\mathbb{R}_+$ and $n\in\mathbb{N}$ (see~\cite[p.~229, Entry~5.1.21]{abram}), the ratio $\frac{E_2(x)}{E_1(x)}$ is increasing on $\mathbb{R}_+$. Therefore, to ensure $\mathfrak{G}(t)>0$ for all $t\in\mathbb{R}_+$, it suffices to verify that $\frac{E_2(t)}{E_1(t)} \ge \frac{1}{2}$ for $t\in[0.26,\infty)$.
Indeed, by numerical evaluations of the function $x^{-1}[E_1(x)+\ln x+\gamma]$ from $0.00$ to $0.50$ with step $0.01$ (see the first and fifth columns of Table~5.1 in~\cite[p.~238]{abram}), where $\gamma=0.57721566\dotsc$ stands for the Euler--Mascheroni constant, we can determine that $E_1(0.26)=1.0138\dotsc$, which leads to the conclusion, with the aid of the above evaluation $E_2(0.26)=0.50744\dotsc$, that
\begin{equation}\label{0.26ratio-half}
\frac{E_2(0.26)}{E_1(0.26)} = 0.5004\dotsm>\frac{1}{2}.
\end{equation}
Consequently, the positivity $\mathfrak{G}(t)>0$ is valid for all $t\in\mathbb{R}_+$. The proof of Lemma~\ref{G(t)-lem} is complete.
\end{proof}

\begin{rem}
The inequality~\eqref{0.26ratio-half} can be proved rigorously as follows. By~\cite[p.~229, Entries~5.1.11 and~5.1.14]{abram},
\begin{equation*}
E_1(x)=-\gamma-\ln x+\sum_{k=1}^{\infty}\frac{(-1)^{k+1}x^k}{k\cdot k!},
\qquad
E_2(x)=\te^{-x}-xE_1(x).
\end{equation*}
For $0<x\le1$ the terms $\frac{x^k}{k\cdot k!}$ decrease in $k$, so the alternating series is less than its partial sum ending with the positive term $\frac{x^3}{18}$. Using $\gamma>0.5772$ and $\ln0.26=\ln13+\ln2-2\ln10>-1.3471$, we obtain
\begin{equation*}
E_1(0.26)<0.7699+\biggl(0.26-\frac{0.26^2}{4}+\frac{0.26^3}{18}\biggr)<0.7699+0.24408<1.014.
\end{equation*}
Similarly, for $0<x\le1$ the terms $\frac{x^k}{k!}$ decrease in $k$, so $\te^{-x}>\sum_{k=0}^{5}\frac{(-x)^k}{k!}$, the partial sum ending with the negative term $-\frac{x^5}{120}$. For $x=0.26$ this equals $0.77105\dotsc$, hence $\te^{-0.26}>0.771$. Consequently
\begin{equation*}
E_2(0.26)=\te^{-0.26}-0.26\,E_1(0.26)>0.771-0.26\times1.014>0.507>\frac12
\end{equation*}
and
\begin{equation*}
\frac{E_2(0.26)}{E_1(0.26)}>\frac12 \iff E_1(0.26)<\frac{2\te^{-0.26}}{1+2\times0.26},
\end{equation*}
where the right-hand side exceeds 
$$
\frac{2\times0.771}{1.52}>1.0144>E_1(0.26).
$$
The inequality~\eqref{0.26ratio-half} thus holds. 
\end{rem}

\begin{rem}
The positivity of $\mathfrak{G}(t)$ on $\mathbb{R}_+$ was posed as a question at \url{https://math.stackexchange.com/q/5144702} (accessed on 23 July 2026). Two answers to this question were subsequently posted at \url{https://math.stackexchange.com/a/5144969} (accessed on 26 July 2026) and at \url{https://math.stackexchange.com/a/5144992} (accessed on 26 July 2026). The proof of Lemma~\ref{G(t)-lem} resolves this question.
\end{rem}

\section{Motivation and background}

In~\cite{22ICFIDCAA-Filomat.tex}, Qi and Li established integral representations of the functions 
$$
H(z),\quad \frac{1}{H(z)},\quad zH(z),\quad \frac{1}{zH(z)},\quad z^2H(z),\quad \frac{1}{z^2H(z)}
$$ 
for $z\in\mathbb{C}\setminus(-\infty,0]$, derived properties including (logarithmically) complete monotonicity, complete Bernstein, Stieltjes, and Thorin--Bernstein properties, and operator monotonicity, and computed the completely monotonic degrees, where
\begin{equation}\label{H-h-relation-eq}
H(z)=
\begin{dcases}
\frac{\ln z+\ln(1+z)-\ln(1+z^2)}{\ln(1+z^2)-\ln(1+z)},&z\ne1;\\
1,&z=1.
\end{dcases}
\end{equation}
\par
In~\cite[Remark~3]{Filomat1193.tex}, the function
\begin{equation}\label{h(x)}
h(x)=
\begin{dcases}
\frac{(x^2-1)\ln(x+1)}{x^2\ln x}, & x\in\mathbb{R}_+\setminus\{1\};\\
2\ln2, & x=1
\end{dcases}
\end{equation}
was conjectured to be logarithmically completely monotonic on $\mathbb{R}_+$.
In~\cite{MIA-9668.tex}, among other things, the above conjecture was confirmed as follows.

\begin{thm}[{\cite[Theorem~1]{MIA-9668.tex}}] \label{MIA-9668.thm}
For $x\in\mathbb{R}_+$, let
\begin{equation*}
g(x)=\begin{dcases}
\frac{x-1}{x\ln x}, & x\in\mathbb{R}_+\setminus\{1\};\\
1, & x=1.
\end{dcases}
\end{equation*}
Then both the functions $g(x)$ and $h(x)$ are logarithmically completely monotonic on $\mathbb{R}_+$.
Moreover, the function
\begin{equation*}
-\frac{g'(x)}{g(x)}=
\begin{dcases}
\frac{1}{x(x-1)}\biggl(\frac{x-1}{\ln x}-1\biggr), & x\in\mathbb{R}_+\setminus\{1\};\\
\frac{1}{2}, & x=1
\end{dcases}
\end{equation*}
has an integral representation
\begin{equation*}
-\frac{g'(x)}{g(x)}
=\frac{1}{x}\int_0^\infty\frac{1}{(\ln u)^2+\pi^2}\frac{1}{u+x}\td u, \quad x\in\mathbb{R}_+
\end{equation*}
and therefore the function $-\frac{g'(x)}{g(x)}$ is completely monotonic on $\mathbb{R}_+$ and the function $-x\frac{g'(x)}{g(x)}$ is a Stieltjes function.
\end{thm}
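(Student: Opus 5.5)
The plan is to obtain everything from an explicit Stieltjes representation of $-\frac{g'(x)}{g(x)}$, and then to treat $h$ by factoring it through $g$.

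\emph{Step 1 (the representation).} Write
\[
-\frac{g'(x)}{g(x)}=\frac1x-\frac1{x-1}+\frac1{x\ln x}=\frac{1}{x(x-1)}\biggl(\frac{x-1}{\ln x}-1\biggr).
\]
The function $\phi(x)=\frac{x-1}{\ln x}$ is holomorphic on $\mathbb{C}\setminus(-\infty,0]$; the singularity at $x=1$ is removable. On the cut, take $z=-u\pm i0$ with $u>0$. There $\ln z=\ln u\pm i\pi$, so
\[
\Im\frac{1}{\ln z}=\mp\frac{\pi}{(\ln u)^2+\pi^2}.
\]
Next set $F(z)=\frac{1}{z-1}\bigl(\frac{z-1}{\ln z}-1\bigr)=\frac1{\ln z}-\frac1{z-1}$. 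This $F$ is holomorphic off $(-\infty,0]$ and behaves like $O(1/|\ln z|)$ at infinity. Near $0$, $F(z)=1+O(1/|\ln z|)$, which is bounded. Cauchy's formula on a keyhole contour, i.e.\ the Stieltjes inversion formula, then gives
\[
F(x)=\int_0^\infty\frac{1}{(\ln u)^2+\pi^2}\frac{1}{u+x}\td u .
\]
The arcs contribute nothing: at infinity $F\to0$, and near $0$ the function $F$ is bounded while the small circle has vanishing length. Dividing by $x$ gives the displayed representation.

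\emph{Step 2 (consequences).} From the representation, $-x\frac{g'(x)}{g(x)}=F(x)$ is a Stieltjes function by Definition~\ref{dfn-stieltjes}, with $a=b=0$ and density $\frac{1}{(\ln u)^2+\pi^2}$. The integrability condition holds because $\frac{1}{(1+u)[(\ln u)^2+\pi^2]}$ is integrable near $0$ and near $\infty$. Complete monotonicity of $-\frac{g'}{g}=\frac1x F(x)$ follows since it is a product of completely monotonic functions; alternatively, $\frac{1}{x(u+x)}$ is completely monotonic for each $u$ and one integrates in $u$. Hence $(-1)^n[\ln g]^{(n)}\ge0$ for $n\ge1$. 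It remains to check $g>0$, which is clear. So $g$ is logarithmically completely monotonic.

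\emph{Step 3 (the function $h$).} I would write
\[
h(x)=g(x)\cdot\frac{(x+1)\ln(x+1)}{x}=g(x)\,k(x),\qquad k(x)=\Bigl(1+\frac1x\Bigr)\ln(1+x),
\]
so that $\ln h=\ln g+\ln k$. It therefore suffices to show that $-(\ln k)'$ is completely monotonic. Compute
\[
-(\ln k)'=\frac{1}{x(x+1)}-\frac{1}{(x+1)\ln(1+x)} .
\]
The natural attempt is to substitute $y=1+x$ and reuse the machinery of Step 1. Note that
\[
\frac1{y-1}-\frac1{\ln y}=-F(y)=-\int_0^\infty\frac{\td u}{[(\ln u)^2+\pi^2](u+y)} .
\]
This gives
\[
-(\ln k)'=\frac{1}{x+1}\Bigl[\frac1x-\frac1{\ln(1+x)}\Bigr]=-\frac{1}{x+1}F(x+1),
\]
which is negative. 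So $\ln k$ is increasing and the splitting above fails, because $k$ alone is not logarithmically completely monotonic.

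Instead I would combine the two pieces:
\[
-(\ln h)'=\frac{F(x)}{x}-\frac{F(x+1)}{x+1}.
\]
The function $\psi(x)=\frac{F(x)}{x}$ is completely monotonic, hence decreasing. The difference $\psi(x)-\psi(x+1)$ is then the negative of a forward difference. Its completely monotonic character follows from the Bernstein representation: $\psi=\mathscr{L}(\mu)$ for some measure $\mu\ge0$, so
\[
\psi(x)-\psi(x+1)=\int_0^\infty \te^{-xt}\bigl(1-\te^{-t}\bigr)\td\mu(t),
\]
which is completely monotonic. Positivity of $h$ is clear. Hence $h$ is logarithmically completely monotonic.

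The main obstacle I expect is Step 1: justifying the contour argument, in particular the behaviour of $F$ near $0$ and near $z=1$, and making sure there is no extra point mass in the representation.
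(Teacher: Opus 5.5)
Your proposal is correct. The paper gives no proof of this statement: it is quoted from \cite{MIA-9668.tex} and used as a black box, so there is no in-paper argument to compare yours with. What you have written is a self-contained proof of the imported result.

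Step~1 goes through exactly as you set it up, and the obstacle you anticipate does not materialize. In $\mathbb{C}\setminus(-\infty,0]$ the function $F(z)=\frac1{\ln z}-\frac1{z-1}$ has only the removable singularity $z=1$. Its boundary values $\frac{1}{\ln u\pm i\pi}+\frac1{1+u}$ are continuous on both edges of the cut, because $\ln u\pm i\pi\neq0$. On $|z|=R$ one has $|F(z)|\le\frac1{\ln R}+\frac1{R-1}$, and $F$ is bounded near $0$, so both arcs vanish. The Cauchy-formula derivation then yields the representation directly, leaving no room for a $\frac ax$ term, a constant $b$, or atoms. As a consistency check, substitute $u=\te^t$ and use the symmetry $t\mapsto-t$: the integral at $x=1$ equals $\frac12\int_{-\infty}^{\infty}\frac{\td t}{t^2+\pi^2}=\frac12=F(1)$, matching the value $\frac12$ in the statement.

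In Step~3 the false start is avoidable. Note that $k(x)=\frac{(x+1)\ln(x+1)}{x}=\frac1{g(x+1)}$, so $h(x)=\frac{g(x)}{g(x+1)}$. This is the same identity $g(x+1)=\frac{x}{(x+1)\ln(x+1)}$ that the paper uses in the proof of Theorem~\ref{h(x)Bern-Thm}. With it, $-(\ln h)'=\psi(x)-\psi(x+1)$ follows at once. Your Bernstein-measure argument with the factor $1-\te^{-t}\ge0$ is then precisely the standard fact that $f(x)/f(x+c)$ is logarithmically completely monotonic whenever $-f'/f$ is completely monotonic and $c>0$.
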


\begin{rem}
Combining Theorem~\ref{MIA-9668.thm} with Lemmas~\ref{Stieltjes-thm}, \ref{Bern2nd-Thm6.2}, and~\ref{Bern2nd-Thm7.3}, we conclude that 
\begin{enumerate}
\item
the functions
$$
\frac{1}{-x^2\frac{g'(x)}{g(x)}}
=
\begin{dcases}
\frac{(x-1) \ln x}{x (x-\ln x-1)}, & x\ne1;\\
2, & x=1
\end{dcases}
$$
and 
$$
\frac{1}{-\frac{1}{x}\frac{g'(1/x)}{g(1/x)}}
=
\begin{dcases}
\frac{(x-1) \ln x}{1-x+x \ln x}, & x\ne1;\\
2, & x=1
\end{dcases}
$$
are Stieltjes functions, and 
\item
the functions 
$$
-x^2\frac{g'(x)}{g(x)}=
\begin{dcases}
\frac{x (x-\ln x-1)}{(x-1) \ln x}, & x\ne1;\\
\frac{1}{2}, & x=1
\end{dcases}
$$ 
and 
$$
\frac{1}{-x\frac{g'(x)}{g(x)}}=
\begin{dcases}
\frac{(x-1) \ln x}{x-\ln x-1}, & x\ne1;\\
2, & x=1
\end{dcases}
$$
are complete Bernstein functions on $\mathbb{R}_+$.
\end{enumerate}
\end{rem}

\begin{rem}
In~\cite[Remark~1]{MIA-9668.tex}, the function $g(x)$ is stated to be a Stieltjes function, which follows from Definition~\ref{dfn-stieltjes} and the integral representation
\begin{equation*}
g(z)=\int_0^\infty \frac{t+1}{t[(\ln t)^2 +\pi^2]}\frac{\td t}{t+z},\quad z\in\mathbb{C}\setminus(-\infty,0]
\end{equation*}
established in~\cite[Lemma~2]{KMS-B14-0477.tex}. Accordingly, in view of Lemma~\ref{Stieltjes-thm}, we see that the function 
$$
\frac{1}{xg(x)}=\frac{1}{g(1/x)}=\frac{\ln x}{x-1}
$$ 
is also a Stieltjes function, which is stronger than the conclusion in~\cite[p.~19]{Schilling-Song-Vondracek-2nd}: the function $\frac{\ln(1+\lambda)}{\lambda}$ is a Stieltjes function. Furthermore, in light of Lemmas~\ref{Bern2nd-Thm6.2} and~\ref{Bern2nd-Thm7.3}, it follows that the functions
\begin{equation*}
xg(x)=g\biggl(\frac1x\biggr)=
\begin{dcases}
\frac{x-1}{\ln x}, & x\ne1;\\
1, & x=1
\end{dcases}
\end{equation*}
and
\begin{equation*}
\frac1{g(x)}=\begin{dcases}
\frac{x\ln x}{x-1}, & x\in\mathbb{R}_+\setminus\{1\};\\
1, & x=1
\end{dcases}
\end{equation*}
are complete Bernstein functions on $\mathbb{R}_+$.
\end{rem}

Based on Lemma~\ref{relations-thm} and Theorem~\ref{MIA-9668.thm}, it is natural to pose the following stronger questions:
\begin{enumerate}
\item
Is the function $h(x)$ a Stieltjes function?
\item
Is the reciprocal $\frac1{h(x)}$ a (complete) Bernstein function?
\end{enumerate}
\par
Addressing the two questions constitutes the main objective of this paper.

\section{Main results and their proofs}

We are now ready to present our main results and provide their proofs.
\par
We first provide a direct proof of the fact that the reciprocal $\frac{1}{g(x)}$ is a complete Bernstein function on $\mathbb{R}_+$.

\begin{thm}\label{g(x)Bern-Thm}
The reciprocal $\frac{1}{g(x)}$ is a complete Bernstein function on $\mathbb{R}_+$ and
\begin{equation*}
\frac{1}{g(x)}=\int_0^\infty\bigl(1-\te^{-xt}\bigr) \biggl[\frac{1}{t}\int_0^{1}\te^{(s-1) t/s}\td s\biggr]\td t, \quad x\in\mathbb{R}_+.
\end{equation*}
\end{thm}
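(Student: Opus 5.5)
Our plan is to verify the stated Lévy--Khintchine representation directly; by Lemma~\ref{Bernstein-thm3.2} this already gives the Bernstein property, and the complete Bernstein property will follow once the density
\begin{equation*}
m(t)=\frac{1}{t}\int_0^{1}\te^{(s-1)t/s}\td s
\end{equation*}
is shown to be completely monotonic. For the latter, we substitute $u=\frac{1-s}{s}\in(0,\infty)$, so that $s=\frac{1}{1+u}$ and $\td s=-\frac{\td u}{(1+u)^2}$. This gives
\begin{equation*}
m(t)=\frac1t\int_0^\infty \frac{\te^{-ut}}{(1+u)^2}\td u,
\end{equation*}
which is a product of the completely monotonic functions $\frac1t$ and a Laplace transform of a positive function. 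Hence $m$ is completely monotonic, and in fact $m(t)=\int_0^\infty \te^{-vt}\bigl(1-\frac{1}{1+v}\bigr)\td v$ after writing $\frac1t\te^{-ut}=\int_u^\infty \te^{-vt}\td v$ and using Fubini. The integrability condition $\int_0^\infty\min\{1,t\}m(t)\td t<\infty$ is checked from the asymptotics: $m(t)\sim \frac{1}{t}$ near $0$, since the inner integral tends to $1$, and $m(t)=O(t^{-2})$ at infinity.

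It remains to verify that the Lévy--Khintchine integral equals $\frac{x\ln x}{x-1}$. Using $m(t)=\int_0^\infty \te^{-vt}\frac{v}{1+v}\td v$ and Tonelli (all integrands are nonnegative), we compute
\begin{equation*}
\int_0^\infty\bigl(1-\te^{-xt}\bigr)m(t)\td t=\int_0^\infty \frac{v}{1+v}\biggl(\frac1v-\frac{1}{v+x}\biggr)\td v=\int_0^\infty\frac{x}{(1+v)(v+x)}\td v.
\end{equation*}
By partial fractions the last integral equals $\frac{x}{x-1}\int_0^\infty\bigl(\frac{1}{1+v}-\frac{1}{v+x}\bigr)\td v=\frac{x\ln x}{x-1}$, with value $1$ at $x=1$ by continuity. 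This identifies the triplet in Lemma~\ref{Bernstein-thm3.2} as $a=b=0$ with density $m$.

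Since $m$ is completely monotonic, the definition of complete Bernstein functions yields the claim. As a cross-check, we can note that the last display is exactly the form in Lemma~\ref{Bern2nd-Thm6.2}(6) with $\td\sigma(v)=\frac{\td v}{1+v}$, which again gives the conclusion. The only delicate step is the interchange of integrals and the change of variables $s\leftrightarrow u$; this is routine because everything is nonnegative. So we expect no real obstacle beyond bookkeeping.
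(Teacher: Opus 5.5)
Your proof is correct, but it runs in the opposite direction from the paper's.

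The paper's route is to derive the density. It first shows that $\frac1{g}$ is Bernstein by proving that $G(x)=[1/g(x)]'$ is completely monotonic via $G(x+1)=\int_0^1\frac{u}{1+xu}\td u$. It then computes $a=b=0$ from the limits at $0^+$ and $\infty$ and differentiates the L\'evy--Khintchine formula. Writing $G$ as a Laplace transform, it reads off the L\'evy density by uniqueness of the Laplace transform. Finally it deduces complete monotonicity of the density from that of $\frac1t$ and $\te^{(s-1)t/s}$.

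Your route is to verify the density as given. The substitution $u=\frac{1-s}{s}$ turns it into $m(t)=\int_0^\infty \te^{-vt}\frac{v}{1+v}\td v$, which makes complete monotonicity and the bounds on $m$ immediate. You then check the representation directly by Tonelli and partial fractions.

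What your route buys:
\begin{enumerate}
\item It avoids the uniqueness theorem and the separate computation of $a,b$.
\item It explicitly verifies $\int_0^\infty\min\{1,t\}m(t)\td t<\infty$, which the paper leaves implicit in Lemma~\ref{Bernstein-thm3.2}.
\item It yields, as a by-product, the Stieltjes-type measure $\frac{\td v}{1+v}$ in the form of Lemma~\ref{Bern2nd-Thm6.2}(6).
\end{enumerate}

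What the paper's route buys: it derives the density rather than confirming a known one. It also produces the Laplace representation~\eqref{G(x)Int} of $G(x)$, which is reused for $1+2G(x)$ in the proof of Theorem~\ref{h(x)Bern-Thm}.
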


\begin{proof}
By Definition~\ref{Bernstein-dfn}, we see that it is sufficient to prove the first derivative
\begin{equation*}
G(x)\triangleq\biggl[\frac{1}{g(x)}\biggr]'=
\begin{dcases}
\frac{x-1-\ln x}{(x-1)^2}, & x\ne1;\\
\frac{1}{2}, & x=1
\end{dcases}
\end{equation*}
is completely monotonic on $\mathbb{R}_+$.
\par
It is clear that
\begin{equation*}
G(x+1)=\begin{dcases}
\frac{x-\ln(1+x)}{x^2}, & x\ne0;\\
\frac{1}{2}, & x=0
\end{dcases}
\end{equation*}
for $x\in(-1,\infty)$ and it is easy to verify that
\begin{equation*}
G(x+1)=\int_0^{1}\frac{u}{1+xu}\td u, \quad x>-1.
\end{equation*}
As a result, the function $G(x+1)$ is completely monotonic on $(-1,\infty)$, that is, the function $G(x)$ is completely monotonic on $\mathbb{R}_+$. Consequently, the reciprocal $\frac{1}{g(x)}$ is a Bernstein function on $\mathbb{R}_+$.
\par
By the integral representation~\eqref{Levy-Khintchine-repreentation} in Lemma~\ref{Bernstein-thm3.2}, we see that
\begin{equation*}
a=\lim_{x\to0^+}\frac{1}{g(x)}=\lim_{x\to0^+}\frac{x\ln x}{x-1}=0
\end{equation*}
and
\begin{equation*}
b=\lim_{x\to\infty}\frac{1}{xg(x)}=\lim_{x\to\infty}\frac{\ln x}{x-1}=0.
\end{equation*}
\par
Further differentiating on both sides of~\eqref{Levy-Khintchine-repreentation} gives
\begin{equation*}
\biggl[\frac{1}{g(x)}\biggr]'=\int_0^\infty t\te^{-xt}\td\sigma(t).
\end{equation*}
This means that
\begin{multline}\label{G(x)Int}
\int_0^\infty t\te^{-xt}\td\sigma(t)=G(x)
=\int_0^{1}\frac{t}{1+(x-1)t}\td t
=\int_0^{1}\int_0^{\infty} \te^{(t-1) u/t} \te^{-u x}\td u\td t\\
=\int_0^{\infty} \biggl[\int_0^{1}\te^{(t-1) u/t}\td t\biggr] \te^{-u x}\td u
=\int_0^{\infty} \biggl[\int_0^{1}\te^{(s-1) t/s}\td s\biggr] \te^{-xt}\td t.
\end{multline}
By the uniqueness of Laplace transforms, we conclude that
\begin{equation*}
\td\sigma(t)=\int_0^{1}\te^{(s-1) t/s}\td s\frac{\td t}{t}.
\end{equation*}
Consequently, we arrive at
\begin{equation*}
\frac{1}{g(x)}=\frac{x\ln x}{x-1}=\int_0^\infty\bigl(1-\te^{-xt}\bigr) \biggl[\frac{1}{t}\int_0^{1}\te^{(s-1) t/s}\td s\biggr]\td t, \quad x\in\mathbb{R}_+.
\end{equation*}
Since both $\frac{1}{t}$ and $\te^{(s-1)t/s}$ for $s\in(0,1)$ are completely monotonic on $t>0$, the function $\frac{1}{t}\int_0^{1}\te^{(s-1)t/s}\td s$ is also completely monotonic on $t>0$. Consequently, $\frac{1}{g(x)}$ is a complete Bernstein function on $\mathbb{R}_+$.
The proof of Theorem~\ref{g(x)Bern-Thm} is thus complete.
\end{proof}

\begin{rem}
The function $G(x+1)$ can be reformulated as the normalized remainder
\begin{equation*}
T_1[\ln(x+1)]=2G(x+1).
\end{equation*}
For detailed information about the normalized remainders, please refer to the review and survey article~\cite{GM-VG-QiF-Ch.tex}.
\par
In general, since
\begin{equation*}
\ln(1+x)=\sum_{k=1}^{\infty}(-1)^{k+1}\frac{x^k}{k}, \quad |x|<1,
\end{equation*}
the $n$th normalized remainder for $n\in\mathbb{N}$ is
\begin{equation*}
T_n[\ln(1+x)]=
\begin{dcases}
(-1)^{n}\frac{n+1}{x^{n+1}} \Biggl[\ln(1+x)-\sum_{k=1}^{n}(-1)^{k+1}\frac{x^k}{k}\Biggr], & x\ne0;\\
1, & x=0.
\end{dcases}
\end{equation*}
Is the normalized remainder $T_n[\ln(1+x)]$ for all $n\in\mathbb{N}$ a completely monotonic function on the interval $(-1,\infty)$? The answer is yes, and we will supply a streamlined proof in a series of subsequent papers.
\end{rem}

\begin{thm}\label{h-not-Stiel-thm}
The function $h(x)$ defined by~\eqref{h(x)} is not a Stieltjes function.
\end{thm}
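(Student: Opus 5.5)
The plan is to use the complex-analytic characterization of Stieltjes functions. If $h$ were a Stieltjes function, then by Definition~\ref{dfn-stieltjes} it would extend analytically to $\mathbb{C}\setminus(-\infty,0]$ by
\begin{equation*}
h(z)=\frac{a}{z}+b+\int_0^\infty\frac{1}{s+z}\td\sigma(s).
\end{equation*}
For $\Im z>0$ we have $\Im\frac1z=-\frac{\Im z}{|z|^2}$ and $\Im\frac{1}{s+z}=-\frac{\Im z}{|s+z|^2}$. Hence the extension would satisfy $\Im h(z)\le0$ throughout the upper half-plane $\mathbb{H}^\uparrow$. The goal is to exhibit points in $\mathbb{H}^\uparrow$ where the natural analytic continuation of $h$ has strictly positive imaginary part.

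Next I will identify that continuation. Define
\begin{equation*}
h(z)=\frac{(z^2-1)\ln(z+1)}{z^2\ln z}
\end{equation*}
with principal branches of the logarithms. This is holomorphic on $\mathbb{C}\setminus(-\infty,0]$. The only zero of $\ln z$ there is $z=1$, and it is removable because of the factor $z-1$ in the numerator. The continuation agrees with $h$ on $\mathbb{R}_+$, so by uniqueness it must coincide with the Stieltjes extension. It therefore suffices to show that $\Im h(z)>0$ for some $z\in\mathbb{H}^\uparrow$.

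To find such points I will compute the boundary values $h(x+i0)$ for $x<-1$. There $\ln(z+1)\to\ln|x+1|+i\pi$ and $\ln z\to\ln|x|+i\pi$, while $\frac{x^2-1}{x^2}>0$. A direct computation gives
\begin{equation*}
\Im\frac{\ln|x+1|+i\pi}{\ln|x|+i\pi}
=\frac{\pi\bigl(\ln|x|-\ln|x+1|\bigr)}{(\ln|x|)^2+\pi^2}>0,
\end{equation*}
since $|x|>|x+1|$ when $x<-1$. Consequently $\Im h(x+i0)>0$ for every $x<-1$. (For $-1<x<0$ the sign turns out to be negative, so the violation appears only on $(-\infty,-1)$.)

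Finally, I fix some $x_0<-1$, say $x_0=-2$. The function $h$ extends continuously from $\mathbb{H}^\uparrow$ up to the boundary point $x_0+i0$, because both logarithms do and $\ln z$ does not vanish there. Therefore $\Im h(x_0+i\varepsilon)>0$ for all sufficiently small $\varepsilon>0$. This contradicts $\Im h\le0$ on $\mathbb{H}^\uparrow$, so $h$ is not a Stieltjes function.

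The main point requiring care is the justification of this boundary limit, in particular that the branch choices give $+i\pi$ when approaching from above. As an alternative, I could phrase the same fact through Stieltjes inversion: the representing measure would have density $-\frac1\pi\Im h(-s+i0)$, which is negative for $s>1$. Either route gives the result. Theorem~\ref{h-not-Stiel-thm} then also yields, via Lemma~\ref{Bern2nd-Thm7.3}, that $\frac1h$ is not a complete Bernstein function.
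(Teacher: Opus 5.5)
Your proof is correct, and it takes a genuinely different route from the paper's.

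The paper works entirely on the real axis. Assuming $h$ is Stieltjes, it invokes Lemma~\ref{Stieltjes-thm} to get that $\frac{1}{h(1/x)}$ is Stieltjes and hence completely monotonic. It then refutes this by evaluating the derivative $F$ of $\frac{1}{h(1/x)}$ at two points: $F\bigl(\frac1{20}\bigr)=-0.31522\dotsc>F\bigl(\frac1{10}\bigr)=-0.33921\dotsc$. So $\frac{1}{h(1/x)}$ is not even convex.

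You instead use the half-plane condition $\Im h(z)\le0$ on $\mathbb{H}^\uparrow$, which you correctly read off from Definition~\ref{dfn-stieltjes}. You identify the continuation of $h$ to $\mathbb{C}\setminus(-\infty,0]$ by the identity theorem and compute the boundary values on the cut in closed form. Your formula
$\Im h(x+i0)=\frac{x^2-1}{x^2}\cdot\frac{\pi(\ln|x|-\ln|x+1|)}{(\ln|x|)^2+\pi^2}>0$ for $x<-1$ checks out; at $x=-2$ it equals $\frac34\cdot\frac{\pi\ln2}{(\ln2)^2+\pi^2}$. Continuity up to $x_0+i0$ is fine, since the limit of $\ln z$ there is $\ln|x_0|+i\pi\ne0$.

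What your route buys:
\begin{itemize}
\item It is exact and needs no numerical evaluation.
\item It does not need Lemma~\ref{Stieltjes-thm}.
\item It pinpoints the obstruction: the would-be representing density $-\frac1\pi\Im h(-s+i0)$ is positive on $(0,1)$ but negative on $(1,\infty)$.
\item The same computation settles the paper's other non-Stieltjes claims directly. For example, for $\frac{1}{xh(x)}$ one gets $\Im>0$ on $(-\infty,-1)$ as well.
\end{itemize}

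The paper's route, in exchange, uses only the real-variable facts collected in its preliminaries. It also yields the extra information that $\frac{1}{h(1/x)}$ fails to be completely monotonic (indeed convex) on $\mathbb{R}_+$.
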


\begin{proof}
If $h(x)$ were a Stieltjes function, then, in view of Lemma~\ref{Stieltjes-thm}, the function
$$
\frac{1}{h\bigl(\frac{1}{x}\bigr)}=\frac{\ln x}{(x^2-1) [\ln(1+x)-\ln x]}
$$
should also be a Stieltjes function. In particular, it should be completely monotonic on $\mathbb{R}_+$. But its first derivative
$$
F(x)\triangleq\biggl[\frac{1}{h\bigl(\frac{1}{x}\bigr)}\biggr]'=\frac{\bigl(x^2-2 x^2 \ln x-1\bigr) \ln\bigl(\frac{1}{x}+1\bigr)+(x-1) \ln x}{x (x^2-1)^2 \ln^2\bigl(\frac{1}{x}+1\bigr)}
$$
is not increasing on $\mathbb{R}_+$, due to
\begin{multline*}
F\biggl(\frac{1}{20}\biggr)=\frac{8000[2(190+\ln21) \ln20-399 \ln21]}{159201 (\ln21)^2}
=-0.31522\dotsc\\
>F\biggl(\frac{1}{10}\biggr)
=\frac{1000[2 (45+\ln11)\ln10-99 \ln11]}{9801 (\ln11)^2}=-0.33921\dotsc.
\end{multline*}
Consequently, the function $h(x)$ is not a Stieltjes function. The proof of Theorem~\ref{h-not-Stiel-thm} is thus complete.
\end{proof}

Since $h(x)$ is not a Stieltjes function, by Lemma~\ref{Bern2nd-Thm7.3}, we deduce that $\frac{1}{h(x)}$ is surely not a complete Bernstein function on $\mathbb{R}_+$. But the reciprocal $\frac{1}{h(x)}$ is a Bernstein function on $\mathbb{R}_+$, a stronger result than the logarithmically complete monotonicity of $h(x)$ in Theorem~\ref{MIA-9668.thm}.

\begin{thm}\label{h(x)Bern-Thm}
The reciprocal $\frac{1}{h(x)}$ is a Bernstein function on $\mathbb{R}_+$.
\end{thm}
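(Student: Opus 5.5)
The plan is to differentiate $\frac1{h(x)}$, write the derivative as a quotient in which both numerator and denominator can be handled by earlier results, and reduce complete monotonicity of the numerator to the positivity of $\mathfrak{G}(t)$ in Lemma~\ref{G(t)-lem}. Since $\frac1{h(x)}>0$ on $\mathbb{R}_+$, Definition~\ref{Bernstein-dfn} only requires that $\bigl[\frac1{h(x)}\bigr]'$ be completely monotonic on $\mathbb{R}_+$. I start from the factorization
\begin{equation*}
\frac1{h(x)}=\frac{x^2\ln x}{(x^2-1)\ln(1+x)}=\frac{\varphi(x)}{\psi(x)},\qquad \varphi(x)=\frac1{g(x)}=\frac{x\ln x}{x-1},\qquad \psi(x)=\frac{(x+1)\ln(1+x)}{x}.
\end{equation*}
This gives
\begin{equation*}
\biggl[\frac1{h(x)}\biggr]'=\frac{N(x)}{\psi^2(x)},\qquad N(x)=\varphi'(x)\psi(x)-\varphi(x)\psi'(x).
\end{equation*}

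For the denominator, I note that $\psi'(x)=\frac{x-\ln(1+x)}{x^2}=G(x+1)$. This is completely monotonic by the proof of Theorem~\ref{g(x)Bern-Thm}. Since $\psi>0$, $\psi$ is a positive Bernstein function. By Lemma~\ref{relations-thm}, $\frac1\psi$ is logarithmically completely monotonic, hence completely monotonic, and so is $\frac1{\psi^2}$. Since a product of completely monotonic functions is completely monotonic, it remains to show that $N$ is completely monotonic.

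For the numerator, I will express every ingredient as a Laplace transform involving $E_1$ and $E_2$. From $E_n(t)=\int_1^\infty s^{-n}\te^{-st}\td s$ and partial fractions, we get
\begin{equation*}
\int_0^\infty \te^{-xt}E_1(t)\td t=\frac{\ln(1+x)}{x},\qquad \int_0^\infty \te^{-xt}E_2(t)\td t=\frac1x-\frac{\ln(1+x)}{x^2}.
\end{equation*}
Shifting $x\mapsto x-1$ in the second formula gives
\begin{equation*}
\int_0^\infty \te^{-xt}\te^{t}E_2(t)\td t=\frac{x-1-\ln x}{(x-1)^2}=G(x)=\varphi'(x).
\end{equation*}
Also, $\psi(x)=(x+1)\mathscr{L}(E_1;x)$. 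Hence the term $\varphi'\psi$ is $(x+1)$ times the Laplace transform of the convolution $\int_0^t \te^{\tau}E_2(\tau)E_1(t-\tau)\td\tau$, which is exactly the first term of $\mathfrak{G}$. Similarly, I will write $\varphi(x)\psi'(x)=\frac{x\ln x}{x-1}\cdot\frac{x-\ln(1+x)}{x^2}$ in terms of these transforms, and I expect it to produce the term $(\te^t-1)E_1(t)$. To do this I use
\begin{equation*}
\frac{\ln x}{x-1}=\mathscr{L}\bigl(\te^{t}E_1(t);x\bigr),\qquad \mathscr{L}\bigl((\te^t-1)E_1(t);x\bigr)=\frac{\ln x}{x-1}-\frac{\ln(1+x)}{x}.
\end{equation*}
The target identity is that $N(x)$ equals a positive multiple (possibly a polynomial or completely monotonic factor times) $\int_0^\infty \te^{-xt}\mathfrak{G}(t)\td t$. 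Any leftover factor of $x$ would be removed by integrating by parts using $E_n'=-E_{n-1}$ and the boundary values $E_2(0)=1$ and $E_1(t)\sim-\ln t$.

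The main obstacle is this bookkeeping: checking that the factor $(x+1)$ and the product $\varphi\psi'$ combine exactly into $\mathscr{L}(\mathfrak{G})$ with the coefficient $2$ on the convolution, without stray non-CM terms. If the direct identification fails, I would instead rewrite $\frac1h=\frac{x}{x+1}\cdot\frac{\ln x}{x-1}\cdot\frac{x}{\ln(1+x)}$ and redo the computation with the numerator taken over $\bigl[\frac{\ln(1+x)}{x}\bigr]^2$. Once $N=\mathscr{L}(\mathfrak{G})$ (up to such a CM factor) is established, Lemma~\ref{G(t)-lem} gives $\mathfrak{G}>0$, so $N$ is completely monotonic by Bernstein's theorem. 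Then $\bigl[\frac1h\bigr]'=\frac{N}{\psi^2}$ is completely monotonic, and $\frac1h$ is a Bernstein function.
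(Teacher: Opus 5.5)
Your proposal is essentially the paper's proof. Your $\frac{1}{\psi^2}=[g(x+1)]^2$, your numerator $N=\varphi'\psi-\varphi\psi'$ is exactly the paper's $\mathcal{G}(x)$, and the paper writes it as $\int_0^\infty\mathfrak{G}(t)\te^{-xt}\td t$ using the same Laplace transforms of $E_1$, $\te^tE_2$, $\te^tE_1$ and the convolution theorem, then concludes with Lemma~\ref{G(t)-lem}.

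The bookkeeping you left open closes exactly, with no extra factor. You should drop the hedge about a polynomial factor, since such a factor would spoil complete monotonicity. Your integration by parts with $E_2(0)=1$ gives $(x-1)G(x)=1-\frac{\ln x}{x-1}$, hence $(x+1)G(x)=2G(x)+1-\frac{\ln x}{x-1}$. The resulting cross term $-\frac{\ln x}{x-1}\cdot\frac{\ln(1+x)}{x}$ in $\varphi'\psi$ cancels the one in $-\varphi\psi'=-\frac{\ln x}{x-1}+\frac{\ln x}{x-1}\cdot\frac{\ln(1+x)}{x}$. This leaves $N=[1+2G(x)]\frac{\ln(1+x)}{x}-\frac{\ln x}{x-1}=\mathscr{L}(\mathfrak{G};x)$.
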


\begin{proof}
By Definition~\ref{Bernstein-dfn}, we see that it is sufficient to prove the first derivative
\begin{align*}
\biggl[\frac{1}{h(x)}\biggr]'
&=\frac{x\bigl[\bigl(x^2-2 \ln x-1\bigr) \ln(x+1)-(x-1) x \ln x\bigr]}{(x^2-1)^2 \ln^2(x+1)}\\
&=\biggl[\frac{x}{(x+1)\ln(x+1)}\biggr]^2 \biggl[\frac{x^2-2 \ln x-1}{(x-1)^2} \frac{\ln(x+1)}{x}-\frac{\ln x}{x-1}\biggr]
\end{align*}
is completely monotonic on $\mathbb{R}_+$.
\par
In view of Theorem~\ref{MIA-9668.thm}, the function $$\frac{x}{(x+1)\ln(x+1)}=g(x+1)$$ is logarithmically completely monotonic on $(-1,\infty)$. Since the product of finitely many logarithmically completely monotonic functions remains logarithmically completely monotonic on the intersection of their logarithmically completely monotonic intervals (which can be verified straightforwardly), it follows that $$\biggl[\frac{x}{(x+1)\ln(x+1)}\biggr]^2 =[g(x+1)]^2$$ is also logarithmically completely monotonic on $(-1,\infty)$. Hence, by virtue of Lemma~\ref{relations-thm}, this function is completely monotonic on $(-1,\infty)$. In the foregoing arguments, the point $x=0$ is a removable singularity.
\par
It remains to prove the complete monotonicity of the function
\begin{align*}
\frac{x^2-2 \ln x-1}{(x-1)^2} \frac{\ln(x+1)}{x}-\frac{\ln x}{x-1}
&=\biggl[1+\frac{2(x-1-\ln x)}{(x-1)^2}\biggr] \frac{\ln(x+1)}{x}-\frac{\ln x}{x-1}\\
&=[1+2G(x)] \frac{\ln(x+1)}{x}-\frac{\ln x}{x-1}\\
&\triangleq\mathcal{G}(x)
\end{align*}
on $\mathbb{R}_+$, where $x=1$ is a removable singularity.
\par
Using the Frullani integral
\begin{equation*}
\ln\frac{b}{a}=\int_0^{\infty}\frac{\te^{-au}-\te^{-bu}}{u}\td u, \quad a,b>0,
\end{equation*}
in~\cite[p.~230, Entry~5.1.32]{abram}, we derive that
\begin{multline*}
\frac{\ln(1+x)}{x}=\frac{1}{x} \int_0^\infty \frac{\te^{-s}-\te^{-(x+1)s}}{s} \td s \\
=\int_0^\infty \frac{\te^{-s}}{s} \biggl(\int_0^s \te^{-xt} \td t \biggr) \td s
=\int_0^\infty \biggl(\int_t^\infty \frac{\te^{-s}}{s} \td s \biggr) \te^{-xt} \td t.
\end{multline*}
By virtue of this integral representation and the integral representation~\eqref{G(x)Int}, we deduce that
\begin{equation*}
1+2G(x)=1+2\int_0^{\infty} \biggl[\int_0^{1}\te^{(s-1) t/s}\td s\biggr] \te^{-xt}\td t.
\end{equation*}
Accordingly, we obtain
\begin{align*}
\mathcal{G}(x)&=\biggl(1+2\int_0^{\infty} \biggl[\int_0^{1}\te^{(s-1) t/s}\td s\biggr] \te^{-xt}\td t\biggr)\int_0^\infty \biggl(\int_t^\infty \frac{\te^{-s}}{s} \td s \biggr) \te^{-xt} \td t \\
&\quad-\int_0^\infty \biggl(\int_t^\infty \frac{\te^{-s}}{s} \td s \biggr) \te^{-(x-1)t} \td t\\
&=\int_0^\infty \biggl(\int_t^\infty \frac{\te^{-s}}{s} \td s \biggr) \te^{-xt} \td t-\int_0^\infty \biggl(\int_t^\infty \frac{\te^{-s}}{s} \td s \biggr) \te^{-(x-1)t} \td t\\
&\quad+2\int_0^{\infty} \biggl[\int_0^{1}\te^{(s-1) t/s}\td s\biggr] \te^{-xt}\td t\int_0^\infty \biggl(\int_t^\infty \frac{\te^{-s}}{s} \td s \biggr) \te^{-xt} \td t,
\end{align*}
where, by the Laplace convolution theorem~\cite[p.~1020, Entry~29.2.8]{abram}, 
\begin{multline*}
\int_0^{\infty} \biggl[\int_0^{1}\te^{(s-1) t/s}\td s\biggr] \te^{-xt}\td t\int_0^\infty \biggl(\int_t^\infty \frac{\te^{-s}}{s} \td s \biggr) \te^{-xt} \td t\\
=\int_0^{\infty} \biggl[\int_0^t \biggl(\int_0^{1} \te^{(s-1)\tau/s} \td s \biggr)
\biggl(\int_{t-\tau}^{\infty} \frac{\te^{-u}}{u} \td u \biggr) \td\tau\biggr]\te^{-xt}\td t.
\end{multline*}
As a result, we arrive at
\begin{multline*}
\mathcal{G}(x)=
\int_0^{\infty}\biggl[\bigl(1-\te^t\bigr)\int_t^\infty \frac{\te^{-s}}{s} \td s\\
+2\int_0^t \biggl(\int_0^{1} \te^{(s-1)\tau/s} \td s \biggr)
\biggl(\int_{t-\tau}^{\infty} \frac{\te^{-u}}{u} \td u \biggr) \td\tau\biggr]\te^{-xt}\td t,
\end{multline*}
where
\begin{align*}
\int_0^{1} \te^{(s-1)\tau/s} \td s
=\int_0^{1} \te^{(1-1/s)\tau} \td s
&=\te^\tau\int_0^{1} \te^{-\tau/s} \td s\\
&=\te^\tau\tau\int_{\tau}^{\infty} \frac{\te^{-s}}{s^2} \td s
=\te^\tau\int_1^{\infty} \frac{\te^{-s\tau}}{s^2} \td s.
\end{align*}
Therefore, we acquire
\begin{equation}
\begin{aligned}\label{G(x)INT}
\mathcal{G}(x)
&=\int_0^{\infty}\biggl[2\int_0^t\te^\tau E_2(\tau)E_1(t-\tau)\td\tau-\bigl(\te^t-1\bigr)E_1(t)\biggr]\te^{-xt}\td t\\
&\triangleq\int_0^{\infty}\mathfrak{G}(t)\te^{-xt}\td t.
\end{aligned}
\end{equation}
From Lemma~\ref{G(t)-lem}, it follows immediately that the function $\mathcal{G}(x)$ is completely monotonic on $\mathbb{R}_+$.
Since the function $[g(x+1)]^2$ is completely monotonic on $(-1,\infty)\supset\mathbb{R}_+$ and the product of two completely monotonic functions is completely monotonic, the derivative
$$
\biggl[\frac{1}{h(x)}\biggr]'=[g(x+1)]^2\,\mathcal{G}(x)
$$
is completely monotonic on $\mathbb{R}_+$.
Since $\frac{1}{h(x)}$ is also nonnegative on $\mathbb{R}_+$, the function $\frac{1}{h(x)}$ is a Bernstein function.
The proof of Theorem~\ref{h(x)Bern-Thm} is thus complete.
\end{proof}

\begin{cor}
The function
\begin{equation*}
\mathcal{G}(x)=
\begin{dcases}
\biggl[1+\frac{2(x-1-\ln x)}{(x-1)^2}\biggr] \frac{\ln(x+1)}{x}-\frac{\ln x}{x-1}, & x\ne1;\\
2\ln2-1, & x=1
\end{dcases}
\end{equation*}
is completely monotonic on $\mathbb{R}_+$ and has the Laplace integral representation~\eqref{G(x)INT}; but it is not a Stieltjes function.
\end{cor}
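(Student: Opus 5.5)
The complete monotonicity of $\mathcal{G}$ and the representation~\eqref{G(x)INT} are already contained in the proof of Theorem~\ref{h(x)Bern-Thm}, together with Lemma~\ref{G(t)-lem}. So for the first part I only need to check the removable singularity at $x=1$. There $G(1)=\frac12$ gives $\bigl[1+2\cdot\frac12\bigr]\ln 2-\lim_{x\to1}\frac{\ln x}{x-1}=2\ln2-1$, which matches the stated value. The real content is the claim that $\mathcal{G}$ is not a Stieltjes function, and I would argue it by contradiction using the Laplace density $\mathfrak{G}$.

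Suppose $\mathcal{G}$ were a Stieltjes function, $\mathcal{G}(x)=\frac ax+b+\int_0^\infty\frac{\td\sigma(s)}{s+x}$. Writing $\frac1{s+x}=\int_0^\infty \te^{-(s+x)t}\td t$ and applying Fubini, every Stieltjes function is $\frac ax+b$ plus the Laplace transform of the completely monotonic function $\varphi(t)=\int_0^\infty \te^{-st}\td\sigma(s)$. I first identify the constants.

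\emph{The constant $b$.} Let $x\to\infty$. Both $G(x)\to0$ and $\frac{\ln(x+1)}{x}\to0$, and $\frac{\ln x}{x-1}\to0$, so $\mathcal{G}(x)\to0$ and hence $b=0$.

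\emph{The constant $a$.} Let $x\to0^+$. Here $G(x)\sim-\ln x$, $\frac{\ln(1+x)}{x}\to1$ and $\frac{\ln x}{x-1}\sim-\ln x$, so $\mathcal{G}(x)=O(|\ln x|)$. Then $x\mathcal{G}(x)\to0$, which forces $a=0$.

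With $a=b=0$ we get $\int_0^\infty \varphi(t)\te^{-xt}\td t=\int_0^\infty\mathfrak{G}(t)\te^{-xt}\td t$ for all $x>0$. Both $\varphi$ and $\mathfrak{G}$ are continuous on $\mathbb{R}_+$, so uniqueness of the Laplace transform gives $\mathfrak{G}=\varphi$. In particular $\mathfrak{G}$ would have to be nonincreasing on $\mathbb{R}_+$.

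The key step, and the only one needing an estimate, is to show $\mathfrak{G}(0^+)=0$. For the subtracted term, $E_1(t)\sim-\ln t$ near $0$, so $(\te^t-1)E_1(t)=O(t|\ln t|)\to0$. For the convolution term, $0<E_2\le1$ gives
\[
0\le\int_0^t \te^{\tau}E_2(\tau)E_1(t-\tau)\td\tau\le \te^t\int_0^t E_1(u)\td u\to0,
\]
because $E_1$ is integrable near $0$. Hence $\mathfrak{G}(0^+)=0$.

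By Lemma~\ref{G(t)-lem}, $\mathfrak{G}(t_1)>0$ for any $t_1>0$. A nonincreasing function on $\mathbb{R}_+$ with $\lim_{t\to0^+}\mathfrak{G}(t)=0$ would satisfy $\mathfrak{G}\le0$ everywhere. This contradicts Lemma~\ref{G(t)-lem}, so $\mathcal{G}$ is not a Stieltjes function. The main place where care is needed is justifying $a=b=0$ and the uniqueness step. The limit $\mathfrak{G}(0^+)=0$ itself is elementary once the integrability of $E_1$ at the origin is used.
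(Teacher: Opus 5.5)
Your proof is correct, but for the non-Stieltjes part it takes a different route from the paper. The paper does not use the Laplace density at that point. It applies Lemma~\ref{Stieltjes-thm} to conclude that $\frac{1}{x\mathcal{G}(x)}$ would be a Stieltjes function, hence decreasing, and refutes this numerically: the value is $2.587\dotsc$ at $x=2$ but $3.170\dotsc$ at $x=5$. You instead use \eqref{G(x)INT}. Once $a=b=0$, uniqueness of the Laplace transform would force $\mathfrak{G}$ to equal the completely monotonic function $\varphi(t)=\int_0^\infty\te^{-st}\td\sigma(s)$. That is impossible, since $\mathfrak{G}(0^+)=0<\mathfrak{G}(t)$ for $t>0$. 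Your steps hold up: reading off $a$ and $b$ from the limits at $0^+$ and $\infty$, the integrability of $E_1$ at the origin, and the continuity of $\mathfrak{G}$ needed for Lerch's theorem. What your approach buys is a fully analytic proof with no numerical evaluation, together with a structural explanation of the failure. You also need less than Lemma~\ref{G(t)-lem}: since $\varphi(0^+)=\sigma(\mathbb{R}_+)$, the condition $\varphi(0^+)=0$ already forces $\sigma=0$, hence $\mathcal{G}\equiv0$, contradicting $\mathcal{G}(1)=2\ln2-1>0$. The two proofs detect the same obstruction. For any Stieltjes $f$, the function $xf(x)=a+bx+\int_0^\infty\frac{x}{s+x}\td\sigma(s)$ is nondecreasing. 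The paper's numerics show that $x\mathcal{G}(x)$ is smaller at $x=5$ than at $x=2$. Your $\mathfrak{G}(0^+)=0$ is the Abelian counterpart of $\lim_{x\to\infty}x\mathcal{G}(x)=0$. Checking that limit directly from the closed form, together with $\mathcal{G}(1)>0$, gives a one-line proof that avoids both the uniqueness step and the numerics.
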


\begin{proof}
The complete monotonicity follows from the proof of Theorem~\ref{h(x)Bern-Thm}.
\par
If $\mathcal{G}(x)$ were a Stieltjes function, then, by Lemma~\ref{Stieltjes-thm}, the function $\frac{1}{x\mathcal{G}(x)}$ should also be a Stieltjes function, and then $\frac{1}{x\mathcal{G}(x)}$ should be a completely monotonic function on $\mathbb{R}_+$, but, since $\frac{1}{x\mathcal{G}(x)}=2.587\dotsc$ at $x=2$ and $\frac{1}{x\mathcal{G}(x)}=3.170\dotsc$ at $x=5$, it is not decreasing on the whole positive axis $\mathbb{R}_+$. This contradiction reveals that the function $\mathcal{G}(x)$ is not a Stieltjes function.
\end{proof}

\begin{rem}
We conjecture that the function $\mathcal{G}(x)$ is logarithmically completely monotonic on $\mathbb{R}_+$.
\end{rem}

\begin{rem}
By the integral representation~\eqref{Levy-Khintchine-repreentation} in Lemma~\ref{Bernstein-thm3.2}, we see that
\begin{equation*}
a=\lim_{x\to0^+}\frac{1}{h(x)}=\lim_{x\to0^+}\frac{x^2\ln x}{(x^2-1)\ln(x+1)}=0
\end{equation*}
and
\begin{equation*}
b=\lim_{x\to\infty}\frac{1}{xh(x)}=\lim_{x\to\infty}\frac{x\ln x}{(x^2-1)\ln(x+1)}=0.
\end{equation*}
Further differentiating on both sides of~\eqref{Levy-Khintchine-repreentation} gives
\begin{equation*}
\biggl[\frac{x}{(x+1)\ln(x+1)}\biggr]^2 \mathcal{G}(x)
=\int_0^\infty t\te^{-xt}\td\sigma(t).
\end{equation*}
Can one obtain an explicit expression of the L\'evy measure $\sigma(t)$ in~\eqref{Levy-Khintchine-repreentation} for the Bernstein function $f(x)=\frac{1}{h(x)}$?
\end{rem}

\begin{rem}
Since the reciprocal $\frac{1}{h(x)}$ is not a complete Bernstein function on $\mathbb{R}_+$, then, by Lemmas~\ref{Bern2nd-Thm6.2} and~\ref{Bern2nd-Thm7.3}, the functions
\begin{equation}\label{Conj1}
\frac{1}{xh(x)}=
\begin{dcases}
\frac{x\ln x}{(x^2-1)\ln(x+1)}, & x\ne1;\\
\frac{1}{2\ln2}, & x=1
\end{dcases}
\end{equation}
and $h(x)$ are not Stieltjes functions. 
\par
The fact that the function $h(x)$ is not a Stieltjes function has been confirmed by Theorem~\ref{h-not-Stiel-thm}.
\par
If $\frac{1}{xh(x)}$ were a Stieltjes function, then, by Lemma~\ref{Stieltjes-thm}, the functions $h(x)$ and
\begin{equation}\label{Conj2}
\frac{1}{x}h\biggl(\frac{1}{x}\biggr)=
\begin{dcases}
\frac{(x^2-1)[\ln(x+1)-\ln x]}{x \ln x}, & x\ne1;\\
2\ln2, & x=1
\end{dcases}
\end{equation}
should also be Stieltjes functions. This contradicts Theorem~\ref{h-not-Stiel-thm}. Consequently, the function $\frac{1}{xh(x)}$ is not a Stieltjes function.
\par
We conjecture that the functions in~\eqref{Conj1} and~\eqref{Conj2} are completely monotonic, even logarithmically completely monotonic, on $\mathbb{R}_+$.
\end{rem}

\section{Declarations}

\paragraph{\bf Author Contributions}
The author was solely responsible for the conception, analysis, and preparation of the manuscript.

\paragraph{\bf Funding}
The author was partially supported by the Natural Science Foundation of Inner Mongolia Autonomous Region (Grant No.~2025QN01041) and by the Youth Project of Hulunbuir City for Basic Research and Applied Basic Research (Grant No.~GH2024020).

\paragraph{\bf Institutional Review Board Statement}
Not applicable.

\paragraph{\bf Informed Consent Statement}
Not applicable.

\paragraph{\bf Ethical Approval}
The conducted research is not related to either human or animal use.

\paragraph{\bf Availability of Data and Materials}
Data sharing is not applicable to this article as no new data were created or analyzed in this study.

\paragraph{\bf Acknowledgements}
Not applicable.

\paragraph{\bf Competing Interests}
The author declares no competing interests.

\paragraph{\bf Use of AI tools declaration}
The author declares that no AI tools were used in the creation of this article.


\begin{thebibliography}{99}

\bibitem{abram}
M. Abramowitz and I. A. Stegun (Eds.), \textit{Handbook of Mathematical Functions with Formulas, Graphs, and Mathematical Tables}, National Bureau of Standards, Applied Mathematics Series \textbf{55}, Reprint of the 1972 edition, Dover Publications, Inc., New York, 1992.

\bibitem{alzer-berg}
H. Alzer and C. Berg, \textit{Some classes of completely monotonic functions}, Ann. Acad. Sci. Fenn. Math. \textbf{27} (2002), no.~2, 445\nobreakdash--460.

\bibitem{CBerg}
C. Berg, \textit{Integral representation of some functions related to the gamma function}, Mediterr. J. Math. \textbf{1} (2004), no.~4, 433\nobreakdash--439. DOI: \url{https://doi.org/10.1007/s00009-004-0022-6}.

\bibitem{Berg-Pedersen-JCAM-2001}
C. Berg and H. L. Pedersen, \textit{A completely monotone function related to the gamma function}, J. Comput. Appl. Math. \textbf{133} (2001), 219\nobreakdash--230. DOI: \url{https://doi.org/10.1016/S0377-0427(00)00644-0}.

\bibitem{berg-pedersen-rocky}
C. Berg and H. L. Pedersen, \textit{Pick functions related to the gamma function}, Rocky Mountain J. Math. \textbf{32} (2002), 507\nobreakdash--525. DOI: \url{https://doi.org/10.1216/rmjm/1030539684}.

\bibitem{Chen-Qi-Srivastava-09.tex}
C.-P. Chen, F. Qi, and H. M. Srivastava, \textit{Some properties of functions related to the gamma and psi functions}, Integral Transforms Spec. Funct. \textbf{21} (2010), no.~2, 153\nobreakdash--164. DOI: \url{https://doi.org/10.1080/10652460903064216}.

\bibitem{absolute-mon-simp.tex}
B.-N. Guo and F. Qi, \textit{A property of logarithmically absolutely monotonic functions and the logarithmically complete monotonicity of a power-exponential function}, Politehn. Univ. Bucharest Sci. Bull. Ser. A Appl. Math. Phys. \textbf{72} (2010), no.~2, 21\nobreakdash--30.

\bibitem{Filomat1193.tex}
X.-L. Liu, T. Xu, C.-Y. He, and F. Qi, \textit{Sharp bounds for even-indexed Bernoulli numbers and their ratios}, Filomat \textbf{40} (2026), accepted on 14 June 2026. URL: \url{https://www.researchgate.net/publication/407043457}.

\bibitem{mpf-1993}
D. S. Mitrinovi\'c, J. E. Pe\v{c}ari\'c, and A. M. Fink, \textit{Classical and New Inequalities in Analysis}, Kluwer Academic Publishers, Dordrecht-Boston-London, 1993. DOI: \url{https://doi.org/10.1007/978-94-017-1043-5}.

\bibitem{GM-VG-QiF-Ch.tex}
F. Qi, \textit{Theory of normalized remainders in Taylor series expansions}, arXiv:2512.15797v3. DOI: \url{https://doi.org/10.48550/arXiv.2512.15797}. Chapter~12 in Contemporary Approaches to Approximation of Operators---Approximation Theory and Mathematical Methods, edited by Gradimir V. Milovanovi\'c and Vijay Gupta, published in Birkh\"auser's \href{https://link.springer.com/series/4961}{Trends in Mathematics} series, pp.~349\nobreakdash--400, 2026. DOI: \url{https://doi.org/10.1007/978-3-032-38767-7_12}.

\bibitem{comgam1}
F. Qi and C.-P. Chen, \textit{A complete monotonicity property of the gamma function}, J. Math. Anal. Appl. \textbf{296} (2004), 603\nobreakdash--607. DOI: \url{https://doi.org/10.1016/j.jmaa.2004.04.026}.

\bibitem{e-gam-rat-comp-mon}
F. Qi, B.-N. Guo, and C.-P. Chen, \textit{Some completely monotonic functions involving the gamma and polygamma functions}, J. Aust. Math. Soc. \textbf{80} (2006), 81\nobreakdash--88. DOI: \url{https://doi.org/10.1017/S1446788700011393}.

\bibitem{JAAC384.tex}
F. Qi and W.-H. Li, \textit{A logarithmically completely monotonic function involving the ratio of gamma functions}, J. Appl. Anal. Comput. \textbf{5} (2015), no.~4, 626\nobreakdash--634. DOI: \url{https://doi.org/10.11948/2015049}.

\bibitem{22ICFIDCAA-Filomat.tex}
F. Qi and W.-H. Li, \emph{Integral representations and properties of some functions involving the logarithmic function}, Filomat \textbf{30} (2016), no.~7, 1659\nobreakdash--1674. DOI: \url{https://doi.org/10.2298/FIL1607659Q}.

\bibitem{KMS-B14-0477.tex}
F. Qi and X.-J. Zhang, \emph{An integral representation, some inequalities, and complete monotonicity of the Bernoulli numbers of the second kind}, Bull. Korean Math. Soc. \textbf{52} (2015), no.~3, 987\nobreakdash--998. DOI: \url{https://doi.org/10.4134/BKMS.2015.52.3.987}.

\bibitem{Schilling-Song-Vondracek-2nd}
R. L. Schilling, R. Song, and Z. Vondra\v{c}ek, \textit{Bernstein Functions---Theory and Applications}, 2nd ed., de Gruyter Studies in Mathematics \textbf{37}, Walter de Gruyter, Berlin, Germany, 2012. DOI: \url{https://doi.org/10.1515/9783110269338}.

\bibitem{widder}
D. V. Widder, \textit{The Laplace Transform}, Princeton University Press, Princeton, 1946.

\bibitem{MIA-9668.tex}
T.-Y. Zhang, A.-P. Ji, and F. Qi, \textit{Two logarithmically completely monotonic functions involving logarithmic function}, Math. Inequal. Appl. \textbf{29} (2026), no.~3, 313\nobreakdash--318. DOI: \url{https://doi.org/10.7153/mia-2026-29-18}.

\end{thebibliography}
\end{document}